\DeclareMathAlphabet{\mathcal}{OMS}{cmsy}{m}{n} 
\newtheorem{thm}{Theorem}[section]
\newtheorem{coro}[thm]{Corollary}
\newtheorem{prop}[thm]{Proposition}
\theoremstyle{definition}
\newtheorem{DMex}{Running example: De Morgan algebras}[]
\newtheorem*{DSex}{Double Stone algebras}
\newtheorem*{MSex}{MS-algebras}
\newtheorem*{Kiex}{Subvarieties of MS-algebras}  
\newcommand{\twiddle}[1]{\smash{\underset{\raise.375ex\hbox{$\smash\sim$}}
      {#1}}\vphantom{\underline{#1}}} %new twiddle
\newcommand{\class}[1]{\mathcal{#1}}
\newcommand{\cat}[1]{\boldsymbol{\mathscr{#1}}}  
\newcommand{\str}[1]{\mathbf{#1}}
\newcommand{\alg}[1]{\mathbf{#1}}
\newcommand{\fnt}[1]{\mathsf{#1}}
\newcommand{\E}{\fnt{E}}
\newcommand{\D}{\fnt{D}}
\newcommand{\ope}[1]{\mathbb{#1}}
\newcommand{\defn}[1]{\emph{#1}}
 \newcommand{\showarticletitle}[1]{#1}
\newcommand{\CX}{\cat{X}}
\newcommand{\A}{\alg{A}}
\newcommand{\B}{\alg{B}}
\newcommand{\F}{\alg{F}}  
\newcommand{\M}{\alg{M}}
\newcommand{\X}{\str{X}}
\newcommand{\Y}{\str{Y}}
\newcommand{\Z}{\str{Z}}
\newcommand{\K}{\class{K}}
\newcommand{\Tp}{\mathcal{T}}
\newcommand{\MT}{\twiddle{\M}}
\newcommand{\CA}{\cat{A}}
\newcommand{\Q}{\class{Q}}
\newcommand{\medsub}[2]{#1\lower0.6ex\hbox{$\scriptstyle{#2}$}}
\newcommand{\epsub}[1]{\medsub \varepsilon {\kern-1.25pt #1}}
\newcommand{\esubA}{\medsub e {\kern-0.75pt\A\kern-0.75pt}}
\DeclareMathOperator{\ISP}{\ope{ISP}}
\DeclareMathOperator{\IScP}{\ope{IS}_c\ope{P}^{+}}
\DeclareMathOperator{\HSP}{\ope{HSP}}
 \DeclareMathOperator{\Con}{Con}
\DeclareMathOperator{\id}{id}  
\DeclareMathOperator{\im}{im}
\DeclareMathOperator{\Cong}{{\rm Con}}
\DeclareMathOperator{\End}{End}  
\newcommand{\Qg}{{\ensuremath{\mathbb{Q}}}}
\newcommand{\Vg}{{\ensuremath{\mathbb{V}}}}
\newcommand{\f}{\ensuremath{\varphi}}
\newcommand{\p}{\ensuremath{\psi}}
\newcommand{\si}{\sigma}
\newcommand{\imp}{\Rightarrow}
\newcommand{\mdl}[1]{\models_{#1}}
\renewcommand{\epsilon}{\varepsilon}  
\newcommand{\tafa}{\emph{TAFA}}
\newcommand{\xdoubleheadleftarrow}[1]{%
\leftarrow\mathrel{\mspace{-15mu}}{\xleftarrow{#1}}
}
\newcommand{\proc}[1]{{\textsc{#1}}}
\newcommand{\eq}{\approx}
\renewcommand{\leq}{\leqslant}
\renewcommand{\geq}{\geqslant}
\newcommand{\lang}{\mathcal{L}}
\begin{document}
% Title portion. Note the short title for running heads 
\title[Checking Admissibility Using Natural Dualities]{Checking Admissibility Using Natural Dualities}  

\author{L.M. Cabrer}
\address{CNRS, IRIF, Univ. Paris-Diderot}
\author{B. Freisberg}
\address{Mathematical Institute, University of Bern}
\author{G. Metcalfe}
\address{Mathematical Institute, University of Bern}

\author{H. A. Priestley}
\address{Mathematical Institute, University of Oxford}

\begin{abstract}
This paper presents a new method for obtaining small algebras to check the admissibility---equivalently, 
validity in free algebras---of quasi-identities in a finitely generated quasivariety. Unlike a previous algebraic approach of Metcalfe and  R{\"o}thlisberger that is feasible only when the relevant free algebra is not too large, this method exploits natural dualities for quasivarieties to work with structures of smaller cardinality and surjective rather than injective morphisms. A number of case studies are described here that could not be be solved using the algebraic approach, including (quasi)varieties of MS-algebras, double Stone algebras, and involutive Stone algebras. 
\end{abstract}
%
%
%\begin{CCSXML}
%<ccs2012>
%<concept>
%<concept_id>10010147.10010148.10010149.10010150</concept_id>
%<concept_desc>Computing methodologies~Algebraic algorithms</concept_desc>
%<concept_significance>500</concept_significance>
%</concept>
%<concept>
%<concept_id>10003752.10003777.10003787</concept_id>
%<concept_desc>Theory of computation~Complexity theory and logic</concept_desc>
%<concept_significance>300</concept_significance>
%</concept>
%<concept>
%<concept_id>10003752.10003790.10003792</concept_id>
%<concept_desc>Theory of computation~Proof theory</concept_desc>
%<concept_significance>300</concept_significance>
%</concept>
%<concept>
%<concept_id>10003752.10003790.10003798</concept_id>
%<concept_desc>Theory of computation~Equational logic and rewriting</concept_desc>
%<concept_significance>100</concept_significance>
%</concept>
%</ccs2012>
%\end{CCSXML}
%
%\ccsdesc[500]{Computing methodologies~Algebraic algorithms}
%\ccsdesc[300]{Theory of computation~Complexity theory and logic}
%\ccsdesc[300]{Theory of computation~Proof theory}
%\ccsdesc[100]{Theory of computation~Equational logic and rewriting}

\keywords{quasivariety, admissibility, free algebra, natural duality}

\thanks{The first author of this paper received support from the European Research Council (ERC) under the European Union\'s Horizon 2020 research and innovation programme (grant agreement No.670624). The third author acknowledges support from Swiss National Science Foundation grant 200021{\_}146748.}

\maketitle

\renewcommand{\shortauthors}{L. Cabrer et al.}

%%%%%%%%%%%%%%%%%%%%%%%%%%%%%%%%%%%%%%%%%%%%%%%%

\section{Introduction}

The concept of an admissible rule was introduced explicitly by Lorenzen in the 1950s in the context of intuitionistic propositional logic~\cite{Lor55}, but appeared already, at least implicitly, in the 1930s in papers by Gentzen~\cite{Gen35} and Johansson~\cite{Joh36}.  Informally, a rule, consisting of a finite set of premises and a conclusion, is derivable in a logical system if the conclusion can be obtained from the premises using the rules of the system, and admissible if adding it to the system produces no new theorems. In classical propositional logic, admissibility and derivability coincide---the logic is said to be structurally complete---but for many non-classical logics there exist admissible rules that are not derivable 
 (see, e.g.,~\cite{Ryb97,Ghi99,Ghi00,Iem01,Jer05,CM09,Jer09a,CM10,Met16}). Such admissible but non-derivable rules may be understood as  ``hidden properties'' of logical systems that can be used, for example, to establish completeness with respect to a certain class of algebras or to shorten derivations in the system.

From an algebraic perspective---where logics correspond to quasivarieties, and rules to quasi-identities---the admissible rules of a quasivariety $\Q$ are the valid quasi-identities of the free algebra of $\Q$ on a countably infinite set of generators.  If $\Q$ is locally finite, then a quasi-identity containing $k$ variables is admissible if and only if it is valid in the $k$-generated free algebra  $\F_\Q(k)$. Hence, if finitely generated free algebras are computable (equivalently, the equational theory of $\Q$ is decidable), checking admissibility in $\Q$ is decidable. Moreover, if $\Q$ is generated by a given class of $n$-generated algebras for some $n \in \mathbb{N}$,  then the $n$-generated free algebra  $\F_\Q(n)$ is computable and suffices for checking admissibility for any quasi-identity.  Nevertheless,  $\F_\Q(n)$ may be too large for checking validity of quasi-identities to be computationally feasible. For example, the (quasi)variety of De Morgan algebras is generated by a $2$-generated $4$-element algebra, but the free De Morgan algebra on $2$ generators has $168$ elements (see Running Example~\ref{DMex0} below). Naively using this algebra to check the admissibility of a rule with $n$ variables would involve considering $168^n$ possible evaluations.
 
This issue of feasibility is addressed by Metcalfe and R{\"o}thlisberger in~\cite{MR13}. 
These authors provide algorithms that for a finite set $\K$ of $n$-generated algebras
 generating a quasivariety $\Q$,  produce a finite set of ``small'' algebras 
 that admits the same valid quasi-identities as $\F_\Q(n)$, that is, 
 the admissible quasi-identities of $\Q$.\footnote{These algorithms have been implemented in a system called {\tafa} (a Tool for Admissibility in Finite Algebras) available to download from \url{https://sites.google.com/site/admissibility/downloads}.} 
 A first algorithm searches for (small) subalgebras of $\F_\Q(n)$ that have members of $\K$
as homomorphic images and therefore generate the same quasivariety. 
A second algorithm provides a generating set of algebras for a finitely generated 
quasivariety that is minimal with respect to the standard multiset well-ordering 
(see~\cite{DeMa79}) on the multiset of cardinalities of the algebras. 
The first algorithm is only feasible, however, when $\F_\Q(n)$ is of a manageable size, 
and the second is only feasible when the algebras in $\K$ are small. 
To get a rough idea what this means, note that these algorithms were used in~\cite{MR13} 
to obtain minimal sets of algebras for checking admissibility in quasivarieties 
such as De Morgan algebras where the relevant free algebra has fewer than $500$ elements, 
but are unable to handle cases such as double Stone algebras where the 
$2$-generated free algebra has $7\thinspace 776$ elements.

Natural dualities were proposed in~\cite{CM15} as a suitable framework for studying 
admissibility in finitely generated quasivarieties and used to obtain axiomatisations of 
admissible quasi-identities for several case studies. In this paper, we make further use of 
natural dualities to obtain new, more efficient, 
methods producing small algebras for checking admissibility in finitely generated quasivarieties. 
Suppose that a structure $\twiddle{\M}$ yields a strong duality on $\Q = \ope{ISP}(\M)$ 
where $\M$ is an $n$-generated algebra (see Section~\ref{sec:natdual}). 
Rather than construct the often prohibitively large free algebra $\F_\Q(n)$, 
we search for surjective morphisms from its dual $\twiddle{\M}^n$, which will have $|M|^n$ elements, 
onto a structure that contains the dual of $\M$.  
We present here a ``Test Spaces Method'' that combines this strategy with a dual version of the 
algorithm of~\cite{MR13} for obtaining minimal generating sets for finitely generated quasivarieties. 
We illustrate the method by using existing natural dualities for De Morgan algebras, MS-algebras, 
double Stone algebras, and involutive Stone algebras, to obtain small algebras for testing admissibility in these 
quasivarieties. Apart from De Morgan algebras, none of these case studies could be solved using the algebraic 
approach presented in~\cite{MR13}.

%%%%%%%%%%%%%%%%%%%%%%%%%%%%%%%%%%%%%%%%%%%%%%%%%%%%%

\section{Checking admissibility algebraically}\label{sec:admiss}

In this section, we provide the required background on quasivarieties, free algebras, 
and admissibility, and explain the algebraic methods developed in~\cite{MR13} 
for checking the admissibility of quasi-identities in finitely generated quasivarieties. 

For convenience, let us assume throughout this paper that $\lang$ is a finite algebraic language 
and that an \defn{$\lang$-algebra} is an algebraic structure for $\lang$. 
We denote the formula algebra (absolutely free algebra) for $\lang$ over
a countably infinite set of variables (free generators) by $\alg{Fm_\lang}$.  
An \defn{$\lang$-identity} is an ordered pair of $\lang$-formulas, written $\f \eq \p$. 
An \defn{$\lang$-quasi-identity} is an ordered pair consisting of a finite set of $\lang$-identities 
$\Sigma$ and an $\lang$-identity  $\f\eq\p$, written $\Sigma \imp \f\eq\p$.
As usual, we drop the prefix $\lang$ in referring to such notions when 
this is clear from the context.

Let $\K$ be a class of $\lang$-algebras. A quasi-identity $\Sigma \imp \f\eq\p$ 
is said to be \defn{valid} in $\K$, denoted by $\Sigma \mdl{\K}  \f\eq\p$,
 if it is satisfied by every $\alg{A} \in \K$: 
 that is, for each homomorphism $h \colon \alg{Fm_\lang} \to \alg{A}$, whenever 
 $h(\f') = h(\p')$ for all $\f' \eq \p' \in \Sigma$, also $h(\f) = h(\p)$. 
 If $\Sigma=\emptyset$, we  write simply $\mdl{\K} \f\eq\p$. 
 We call $\K$ a \defn{quasivariety} (\defn{variety}) if there exists a set $\Lambda$ of quasi-identities (identities) 
 such that  $\alg{A} \in \K$ if and only if $\alg{A}$ 
 satisfies all members of $\Lambda$. 
 The \defn{quasivariety $\Qg(\K)$} and \defn{variety $\Vg(\K)$ generated by} 
 $\K$ are, respectively, the smallest quasivariety and variety containing $\K$. 
 Let $\ope{H}$, $\ope{I}$, $\ope{S}$, $\ope{P}$, and $\ope{P}_U$ be the class operators 
 of taking homomorphic images, isomorphic images, subalgebras, products, and ultraproducts, 
 respectively. 
 Then $\Qg(\K) = \ope{ISPP}_U(\K)$ and $\Vg(\K) = \ope{HSP}(\K)$, 
 and if $\K$ is a finite set of finite algebras, $\Qg(\K) = \ope{ISP}(\K)$ 
 (see~\cite[Theorems~II.9.5 and V.2.25, and Lemma~IV.6.3]{BS81}).

For a cardinal $\kappa$, an $\lang$-algebra $\alg{B}$ is a 
\defn{free $\kappa$-generated algebra for $\K$}  
if there exists a set $X$ of cardinality $\kappa$ and a map $g\colon X\to \alg{B}$
such that $g[X]$ generates $\alg{B}$ and for every $\alg{A} \in \K$ and map 
$f \colon X \to \alg{A}$ there exists a (unique) homomorphism $h \colon \alg{B} \to \alg{A}$ 
satisfying $f=h \circ g$. 
In this case, each $x \in X$ is called a \defn{free generator} of $\alg{B}$, 
and the free algebra $\B$ is denoted by $\F_{\K}(\kappa)$. 
Note that $\F_{\K}(\kappa)$ might not belong to $\K$ but is always a member of $\Qg(\K)$. 
Note also that $\F_{\K}(\kappa)=\F_{\Qg(\K)}(\kappa)=\F_{\Vg(\K)}(\kappa)$, 
and we may therefore use $\F_{\K}(\kappa)$,  $\F_{\Qg(\K)}(\kappa)$, and 
$\F_{\Vg(\K)}(\kappa)$ interchangeably to denote the same algebra.

A quasi-identity $\Sigma \imp \f\eq\p$ is said to be \defn{admissible} in a class of 
$\lang$-algebras $\K$ if for every homomorphism (substitution) 
$\si \colon \alg{Fm_\lang} \to \alg{Fm_\lang}$,
\[
\mdl{\K} \si(\f') \eq \si(\p')\,  
\mbox{ for all }\f' \eq \p' \in \Sigma  \quad \Longrightarrow \quad \mdl{\K} \si(\f) \eq \si(\p).
\]
It is easily seen that the admissibility of a quasi-identity in $\K$ is equivalent to its validity in $\F_\K(\omega)$ 
(see, e.g.,~\cite{Ryb97}). Moreover, if $\K$ is a finite set of $n$-generated $\lang$-algebras for some 
$n \in \mathbb{N}$, then for any quasi-identity $\Sigma \imp \f \eq \p$,
\begin{equation}\label{Eq:AdmFree}
\tag{$\dagger$}
\Sigma \imp \f\eq\p \text{ is admissible in $\K$}  
\quad \Longleftrightarrow  \quad
\Sigma \mdl{\F_{\K}(n)} \f\eq\p.
\end{equation}
More generally, admissibility of quasi-identities in $\K$ is equivalent to validity in a class 
$\K'$ of $\lang$-algebras if and only if $\Qg(\K') = \Qg(\F_\K(\omega))$, 
which holds in turn if and only if $\K' \subseteq \Qg(\F_\K(\omega))$ and 
$\K \subseteq \Vg(\K')$  (see~\cite[Proposition~14]{MR13}). 

In~\cite{MR13}, Metcalfe and R{\"o}thlisberger introduced a method that, 
given any finite set $\K$ of $n$-generated $\lang$-algebras, produces a ``smallest'' set of
$\lang$-algebras $\K'$ such that admissibility of quasi-identities in $\K$ is equivalent 
to validity in $\K'$, that is, such that $\ISP(\K') = \ISP(\F_\K(n))$. 
To describe this method, and explain what ``smallest'' means in this context, 
let us first recall the standard multiset order and the notion of a minimal generating set 
for a finitely generated quasivariety. 
A \emph{finite multiset} over a set $S$ is an ordered pair $\langle S,f \rangle$,
where $f$ is a function $f\colon S \to \mathbb{N}$ and $\{ x \in S  \mid  f(x) > 0 \}$ is finite. 
As usual, we write  $[a_1,\ldots,a_n]$ to denote such a multiset where $a_1,\ldots,a_n \in S$ 
may include repetitions. 
If $\le$ is a well-ordering of $S$, then the \defn{multiset ordering $\leq_m$} on the set of all 
finite multisets over $S$ defined by
\[
\langle S,f \rangle \leq_m \langle S,g \rangle \ :\Leftrightarrow\ \forall x \in S \bigl( f(x) > g(x) \ \Longrightarrow \ \exists y \in S \bigl( y > x \text{ and } g(y) > f(y)\bigr) \bigr)
\]
is also a well-ordering~(see~\cite{DeMa79}). 

A set of finite $\lang$-algebras $\{ \A_1, \dots, \A_n \}$ is said to be a \defn{minimal generating set} for the quasivariety $\ISP(\A_1, \dots, \A_n)$
 if, for any set of finite $\lang$-algebras $\{\B_1, \dots, \B_k\}$,
\[
\ISP(\A_1, \dots, \A_n) = \ISP(\B_1, \dots, \B_k) 
\ \Longrightarrow \
[|A_1|, \dots, |A_n|] \leq_m [|B_1|, \dots, |B_k|].
\]
The choice of this order is based on the fact that if $[|A_1|, \dots, |A_n|] \leq_m [|B_1|, \dots, |B_k|]$, then for quasi-identities with sufficiently many variables, checking validity in  $\{ \A_1, \dots, \A_n \}$  involves considering fewer assignments of variables than checking validity in $\{ \B_1, \dots, \B_k \}$. Hence our goal will be to obtain a set of generators that is minimal in the multiset order. It is proved in~\cite{MR13} that for finitely generated quasivarieties such a set always exists and is unique up to isomorphism (see Theorem~\ref{t:MinGenSet}).

%%%%%%%%%%%%%%%%%%%%%%%%%%%%%%%%%%%%%%%%%%%%%%%%%%%%%%%%%%
\section{The algorithms \proc{MinGenSet} and \proc{SubPreHom}}\label{algos}
%%%%%%%%%%%%%%%%%%%%%%%%%%%%%%%%%%%%%%%%%%%%%%%%%%%%%%%%%%

To understand how the algorithm $\proc{MinGenSet}$ works, we first present a criterion 
for a set of finite algebras to be the minimal  generating set (up to isomorphism) for a finitely 
generated quasivariety~$\Q$. Let $\A$ be any $\lang$-algebra and let $\Con(\A)$ denote 
the congruence lattice of $\A$ with bottom element (i.e.,  identity relation) $\Delta_{\A}$. 
We call $\theta \in \Con(\A)$ a \defn{$\Q$-congruence} if $\A/\theta\in\Q$. The set of 
$\Q$-congruences of $\A$ is then a lattice $\Cong_{\Q}(\A)$ under set-inclusion, 
and a meet subsemilattice of $\Con(\A)$. Moreover, since $\Q$ is a quasivariety 
(in particular, closed under products and isomorphic images), there is a minimal congruence 
$\theta$ such that $\A/\theta\in\Q$, and, trivially, $\A \in \Q$ if and only if $\Delta_{\A}\in\Con_{\Q}(\A)$. 

An $\lang$-algebra $\A$ is said to be \defn{$\Q$-subdirectly irreducible} if whenever $\A$ is a subdirect product of algebras in  $\Q$, it is isomorphic to one of the components, or, equivalently, if $\Delta_{\A}$ is completely meet-irreducible in the lattice $\Cong_{\Q}(\A)$. If $\A$ is finite, then $\Con(\A)$ is also finite and the completely meet-irreducible and meet-irreducible elements of  $\Cong_{\Q}(\A)$ coincide. Moreover, if $\K$ is a finite set of $\lang$-algebras and $\Q=\ISP(\K)$,  then $\Q=\ISP(\{\A/\theta\mid \A\in\K \text{ and }\theta\text{ is meet-irreducible in }\!\Cong_{\Q}(\A)\})$ (see~\cite[Corollary 6]{XC80}). 

\begin{thm}[{\cite[Theorems 4 and 8]{MR13}}] \label{t:MinGenSet}
Let $\Q$ be a finitely generated quasivariety. Then there exists a finite set of finite $\Q$-subdirectly irreducible algebras $\{\A_1, \dots, \A_n \}$ such that $\Q = \ISP(\A_1, \dots, \A_n )$ and $\A_i \not \in \ope{IS}(\A_j)$ for each $j \in \{1,\ldots,n\} {\setminus} \{i\}$. Moreover, $\{ \A_1, \dots, \A_n \}$ is (up to isomorphism) the unique minimal generating set for~$\Q$.
\end{thm}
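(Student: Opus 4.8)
The plan is to separate a short \emph{existence} step from the harder \emph{uniqueness/minimality} step, the technical heart being the characterisation of $\Q$-subdirect irreducibility as complete meet-irreducibility of $\Delta_\A$ in $\Cong_\Q(\A)$, which the excerpt already records. For existence, write $\Q = \ISP(\K)$ with $\K$ a finite set of finite algebras. By the result of Xu and Clark quoted just above, $\Q = \ISP(\mathcal{S})$ where $\mathcal{S}$ consists of the quotients $\A/\theta$ with $\A \in \K$ and $\theta$ meet-irreducible in $\Cong_\Q(\A)$; using the correspondence between $\Cong_\Q(\A/\theta)$ and the set of $\Q$-congruences of $\A$ above $\theta$, each such $\A/\theta$ is a finite $\Q$-subdirectly irreducible algebra, so $\mathcal{S}$ is a finite set of finite $\Q$-subdirectly irreducibles with $\Q = \ISP(\mathcal{S})$. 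I would then order $\mathcal{S}$ by $\A \preceq \B$ iff $\A \in \ope{IS}(\B)$; since the algebras are finite, a mutual embedding is an isomorphism, so $\preceq$ descends to a partial order on isomorphism classes, and picking one representative $\A_1, \dots, \A_n$ from each $\preceq$-maximal class gives $\Q = \ISP(\mathcal{S}) = \ISP(\A_1, \dots, \A_n)$ (every member of $\mathcal{S}$ embeds into some $\A_i$) together with $\A_i \notin \ope{IS}(\A_j)$ for $i \neq j$; each $\A_i$ is $\Q$-subdirectly irreducible as a member of $\mathcal{S}$. This produces a set with the two asserted properties.

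The second step rests on the observation that if $\A$ is $\Q$-subdirectly irreducible and $\A \in \ISP(\mathcal{C})$ for some $\mathcal{C} \subseteq \Q$, then $\A \in \ope{IS}(\B)$ for some $\B \in \mathcal{C}$: an embedding $\A \hookrightarrow \prod_l \B_l$ with $\B_l \in \mathcal{C}$ presents $\A$ as a subdirect product of the subalgebras $\pi_l[\A] \leq \B_l$, all in $\Q$, so $\Q$-subdirect irreducibility forces $\A \cong \pi_l[\A]$ for some $l$. From this I would extract a rigidity statement: \emph{any two sets of $\Q$-subdirectly irreducible algebras, each generating $\Q$ and each having pairwise $\ope{IS}$-incomparable members, coincide up to isomorphism.} Indeed, given such sets $\{\A_i\}$ and $\{\B_j\}$, the observation supplies for each $i$ some $j$ with $\A_i \in \ope{IS}(\B_j)$ and then some $i'$ with $\B_j \in \ope{IS}(\A_{i'})$; composing embeddings gives $\A_i \in \ope{IS}(\A_{i'})$, so $i = i'$ by incomparability, whence $\A_i$ and $\B_j$ embed into each other and are isomorphic. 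Incomparability on each side makes $i \mapsto j$ a well-defined injection, and the symmetric construction makes it a bijection, so the two families agree up to isomorphism.

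Finally I would bring in the multiset order. Because $\leq_m$ is a well-ordering, the nonempty collection of cardinality multisets of finite generating sets of $\Q$ has a least element, realised by some finite generating set $\{\B_1, \dots, \B_k\}$; this is a minimal generating set, and every minimal generating set has exactly this cardinality multiset. Next I would show that a minimal generating set must consist of pairwise $\ope{IS}$-incomparable $\Q$-subdirectly irreducibles: if $\B_j \in \ope{IS}(\B_{j'})$ with $j \neq j'$, then deleting $\B_j$ still generates $\Q$ and strictly lowers the multiset; and if some $\B_j$ is not $\Q$-subdirectly irreducible, then $\Delta_{\B_j}$ is a meet of meet-irreducible $\Q$-congruences strictly above it, so $\B_j$ embeds subdirectly into a product of strictly smaller quotients $\B_j/\theta_s$, and replacing $\B_j$ by those quotients again generates $\Q$ and strictly lowers the multiset; either way $\{\B_j\}$ was not minimal. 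Thus the minimal generating set, and also the set $\{\A_1, \dots, \A_n\}$ from the existence step, satisfy the hypotheses of the rigidity statement, so they are isomorphic; hence $\{\A_1, \dots, \A_n\}$ is a minimal generating set and is the only one up to isomorphism.

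The step I expect to be most delicate is the multiset bookkeeping in the last paragraph --- verifying that the two shrinking moves strictly decrease the Dershowitz--Manna multiset of cardinalities, for which the equivalent description of $\leq_m$ as ``remove one or more elements, then add only strictly smaller ones'' is the convenient tool --- together with the degenerate case of a trivial algebra, where $\Cong_\Q$ may be a singleton; this is disposed of separately by noting that a trivial algebra is redundant in any generating set of a non-trivial quasivariety, the trivial quasivariety being immediate.
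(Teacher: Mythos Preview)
The paper does not prove this theorem at all: it is stated with the citation \cite[Theorems 4 and 8]{MR13} and used as a black box, so there is no in-paper argument to compare against. Your proposal is a correct and self-contained reconstruction of the result, following what is essentially the expected route: extract finitely many finite $\Q$-subdirectly irreducibles from a finite generating set via Caicedo's result, prune to $\ope{IS}$-maximal members, then use the standard fact that a $\Q$-subdirectly irreducible in $\ISP(\mathcal{C})$ already lies in $\ope{IS}(\mathcal{C})$ to get rigidity, and finally invoke the Dershowitz--Manna well-ordering to force any minimal generating set to consist of pairwise $\ope{IS}$-incomparable $\Q$-subdirectly irreducibles. The multiset-shrinking moves you flag as delicate are fine as stated: removing a redundant $\B_j$ deletes an element without adding anything, and replacing a non-$\Q$-SI $\B_j$ by its proper $\Q$-SI quotients removes one element and adds only strictly smaller ones, both strict decreases in the multiset order.
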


We now present the algorithm $\proc{MinGenSet}$ given in~\cite{MR13} for obtaining the minimal generating set of a finitely generated quasivariety~$\Q$. Take as input a finite set of finite $\lang$-algebras $\K$ with $\Q = \ISP(\K)$ and let $\mathcal{M}$ be a list containing the algebras in $\K$, setting $i=1$:

\begin{enumerate} 

\item[1.]	\texttt{For $\A$ at position $i$ in $\mathcal{M}$, determine the set $S_1$  (resp.~$S_2$) of congruences $\theta \in \Con(\A) {\setminus} \{\Delta_\A\}$ for which  $\A / \theta$ embeds into $\A$ (resp.~another member of $\mathcal{M}$).}

\item[2.]	\texttt{If $\bigcap (S_1 \cup S_2) = \Delta_\A$, then add  $\A / \theta$ to $\mathcal{M}$ for each $\theta \in S_1 {\setminus} S_2$ and remove $\A$, otherwise set $i$ to $i+1$.}

\item[3.]	\texttt{If $i \leq {\rm length}(\mathcal{M})$, then repeat from 1.}
\item[4.] \texttt{Remove from $\mathcal{M}$ any algebra that is a proper subalgebra of another member of~$\mathcal{M}$, and output $\mathcal{M}$ as a set.}
\end{enumerate}

\noindent
Note that Step 2 is key here. If $\bigcap (S_1 \cup S_2) = \Delta_\A$, then clearly $\A$ embeds into $\prod_{\theta\in S_1\cup S_2} \A/\theta$. Using the fact that $\K$ generates $\Q$, it follows that $\A$ is $\Q$-subdirectly irreducible if and only if $\bigcap (S_1 \cup S_2) \neq \Delta_\A$ (see~\cite[Lemma 7]{MR13}). If $\A$ is $\Q$-subdirectly irreducible, the algorithm proceeds to the next algebra in the list $\mathcal{M}$; otherwise it adds to $\mathcal{M}$ all the quotients $\A / \theta$ for $\theta \in \Con(\A) {\setminus} \{\Delta_\A\}$ that embed into $\A$, but not into any other member of  $\mathcal{M}$.
 
The algorithm $\proc{MinGenSet}$ involves calculating congruence lattices of 
finite algebras  and this takes exponential time.
It is therefore only feasible when the algebras $\A_i$ are small. 
Indeed, with its breadth-first approach, the algorithm 
calculates more lattices of congruences than are strictly necessary. 
Once $\Cong_{\Q}(\A)$ has been calculated for some $\A \in \K$, 
also the structure of the congruence lattices of its quotients is known.  
Hence we can recognise which congruences  $\theta\in\Cong_{\Q}(\A)$ are such that 
$\A/\theta$ is $\Q$-subdirectly irreducible. 
These are exactly the meet-irreducible elements of the lattice $\Cong_{\Q}(\A)$. 
Hence we can improve the algorithm by applying a depth-first approach. 
Let $\mathcal{S}$ initially be the empty set.  
Then the refined algorithm proceeds as follows:

\begin{enumerate} 
\item[1.]	\texttt{Given $\A\in \K$,  calculate $\Con_{\Q}(\A)$.}

\item[2.]	\texttt{Determine the set $L$ of (completely) meet-irreducible elements of  
$\Con_{\Q}(\A)$.}
\item[3.] \texttt{Add to $\mathcal{S}$ the algebras $\A/\theta$ such that $\theta$ 
is a minimal element of $L$.}
\item[4.] \texttt{Delete $\A$ from $\K$ and if $\K$ is non-empty, repeat from 1.}
\item[5.] \texttt{Remove from $\mathcal{S}$ any algebra that is a proper subalgebra of another 
member of~$\mathcal{S}$, and output $\mathcal{S}$.}
\end{enumerate}

The algorithm \proc{MinGenSet} described above applies to \emph{any} finite generating set of finite algebras for a quasivariety. Our specific interest in this paper lies, however, with finding small algebras that can be used to test admissibility. That is, we consider a quasivariety generated by a finite set $\K$ of $n$-generated $\lang$-algebras, and seek a minimal generating set for the quasivariety $\Q= \ISP(\F_\K(n))$.  The free algebra $\F_\K(n)$ is often very large (see for example Table~\ref{table:casestud} on page~\pageref{table:casestud}), and it is therefore useful to first seek smaller algebras that generate $\Q$, rather than attempting to apply \proc{MinGenSet} directly to $\F_\K(n)$, which would requires a description of $\Con_{\Q}(\F_{\K}(n))$. To this end, an algorithm $\proc{SubPreHom}$ is defined in~\cite{MR13} that searches for a smallest subalgebra of a finite algebra $\A$ such that another finite algebra $\B$ is a homomorphic image of $\A$. In particular, if $\K = \{\B_1,\dots,\B_m\}$, and $\alg{C}_1,\ldots,\alg{C}_m$ are subalgebras of $\F_\K(n)$ such that $\B_i$ is a homomorphic image of $\alg{C}_i$ for each $i$, then $\Q = \ISP(\alg{C}_1,\ldots,\alg{C}_m)$. Although this step is optional in the sense that $\proc{MinGenSet}$ can be applied directly to $\F_\K(n)$, applying $\proc{SubPreHom}$ already gives a 
best-possible result in certain cases.

We now consider in the present context the well-studied example in which $\K$ consists of a single $4$-element algebra generating the (quasi)variety of De Morgan algebras.

%%%%%%%%%%%%%%%%%%%%%

\begin{DMex}\label{DMex0}
A \defn{De Morgan algebra} $\A=(A; \wedge, \vee, \neg,0,1)$ consists of a  bounded distributive lattice $(A; \wedge, \vee, 0, 1)$ equipped with a unary operation $\neg$ satisfying $\neg\neg a=a$ and $\neg(a\wedge b)=\neg a \vee \neg b$ for all $a,b\in A$. Let us denote by $\class{DM}$ the variety of all De Morgan algebras and  by $\alg{D_4}$ the De Morgan algebra represented in Fig.~\ref{Fig:DeMorgan}(a) (using arrows to depict the action of~$\neg$). Then $\class{DM} = \HSP(\alg{D_4}) = \ISP(\alg{D_4})$  (see~\cite{Kal58} or~\cite[Chapter XI, Section~2]{BaDw}).

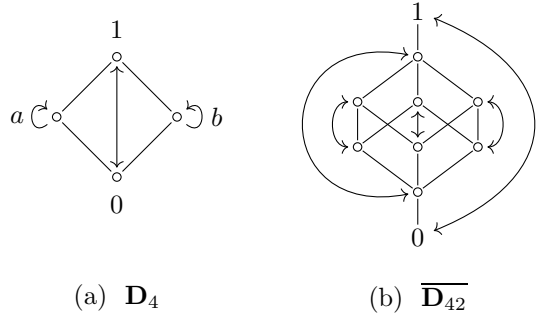
\begin{figure}[ht]
\begin{center}
	\begin{tikzpicture}[scale=.8]  
		%%%%% D4
		\node [label=above:{$1$}](D1) at (0,3) {};   
		\node [label=below:{$0$}](D2) at (0,1) {}; 
		\node (D3) at (-1,2) {};  
		\node (D4) at (1,2) {}; 	
		\node [xshift=-15pt] at (D3) {$a$};
		\node [xshift=15pt] at (D4) {$b$};
		
\draw [<->]  (D1) to (D2);
\draw [<-]  (D3) .. controls +(-.5,.4) and +(-.5,-.4) .. (D3);
\draw [<-]  (D4) .. controls +(.5,.4) and +(.5,-.4) .. (D4);

		\draw [shorten <=-2pt, shorten >=-2pt] (D1) -- (D3);
		\draw [shorten <=-2pt, shorten >=-2pt] (D1) -- (D4);
		\draw [shorten <=-2pt, shorten >=-2pt] (D2) -- (D3);
		\draw [shorten <=-2pt, shorten >=-2pt] (D2) -- (D4);
		
		\draw (D1) circle [radius=2pt];
		\draw (D2) circle [radius=2pt];
		\draw (D3) circle [radius=2pt];
		\draw (D4) circle [radius=2pt];
		\node at (0,-1) {(a) \ $\alg{D}_{4}$};   

\node at (5,-1) {(b) \ $\overline{\alg{D}_{42}}$};   
		\node (1) at (5,0) {$0$}; 
		\node (2) at (5,3) { };   
		\node (3) at (5,2.25) {}; 
		\node (4) at (5,1.5) {}; 
		\node (5) at (5,.75) {}; 
		\node (6) at (4,2.25) {};  
		\node (7) at (4,1.5) {}; 
		\node (8) at (6,2.25) {}; 
		\node (9) at (6,1.5) {}; 
		\node (10) at (5,3.75) {$1$};

		\draw (2) circle (2pt);
		\draw  (3) circle (2pt);
		\draw  (4)  circle (2pt);
		\draw  (5) circle (2pt);
		\draw  (6) circle (2pt);
		\draw  (7) circle (2pt);
		\draw  (8) circle (2pt);
		\draw  (9) circle (2pt);

%order
   		\draw [shorten <=-2pt, shorten >=-1pt] (1) -- (5);
		\draw [shorten <=-2pt, shorten >=-2pt] (5) -- (7);
		\draw [shorten <=-1pt, shorten >=-1pt] (5) -- (4);
		\draw [shorten <=-2pt, shorten >=-2pt] (5) -- (9);
		\draw [shorten <=-1pt, shorten >=-1pt] (7) -- (6);
		\draw [shorten <=-1pt, shorten >=-1pt] (9) -- (8);
		\draw [shorten <=-2pt, shorten >=-2pt] (2) -- (8);
		\draw [shorten <=-1pt, shorten >=-1pt] (2) -- (3);
		\draw [shorten <=-2pt, shorten >=-2pt] (2) -- (6);
		\draw [shorten <=-1pt, shorten >=-2pt] (2) -- (10);
		\draw [shorten <=-2pt, shorten >=-2pt] (7) -- (3);
		\draw [shorten <=-2pt, shorten >=-2pt] (9) -- (3);
		\draw [shorten <=-2pt, shorten >=-2pt] (6) -- (4);
		\draw [shorten <=-2pt, shorten >=-2pt] (8) -- (4);

%%arrows

\draw [<->]  (1) .. controls +(2.5,1) and +(2.5,-1) .. (10);
\draw [<->]  (2) .. controls +(-2.5,.5) and +(-2.5,-.5) .. (5);
\draw [<->]  (3) -- (4);
\draw [<->]  (6) .. controls +(-.5,0) and +(-.5,0) .. (7);
\draw [<->]  (8) .. controls +(.5,0) and +(.5,0) .. (9);
	\end{tikzpicture}
\end{center}
\caption{De Morgan algebras}\label{Fig:DeMorgan}
\end{figure}
\noindent

Since $\alg{D_4}$ is $2$-generated, the admissibility of a quasi-identity in $\class{DM}$ is 
equivalent to its validity in the free algebra $\F_{\class{DM}}(2)$. 
This algebra has cardinality  $168$ (the $4^\text{th}$ Dedekind number),
so using it to check the admissibility of a quasi-identity with, e.g.,~$3$  variables, would require 
considering $168^3=4\,741\,632$ possible evaluations. 
In~\cite{MR12}, Metcalfe and R{\"o}thlisberger proved, however, 
that the (much smaller) $10$-element De Morgan algebra $\overline{\alg{D}_{42}}$ 
(see Fig.~\ref{Fig:DeMorgan}(b)) generates the same quasivariety as $\F_{\class{DM}}(2)$. 
In~\cite{MR13}, the same authors used \proc{MinGenSet} and \proc{SubPreHom}
 to confirm that $\{\overline{\alg{D}_{42}}\}$ is indeed the minimal generating set for this quasivariety. 
\end{DMex}

%%%%%%%%%%%%%%%%%%%%%%%%%%%%%%%%%%%%%%%%%%%%%%%%%%%%%%%%%%%%%%

\section{Natural dualities}\label{sec:natdual}

This section  recalls very briefly the theory of natural dualities, 
noting that a textbook treatment is given in~\cite{CD98}.  

Let $\M$ be a finite algebra. 
Depending on the context, we denote by $\ISP(\M)$ both the quasivariety generated by $\M$ and 
the category $\CA$ consisting of algebras from this quasivariety as objects 
with homomorphisms between algebras as the morphisms. 
Our aim is to find a second category~$\CX$, whose objects are 
topological structures of common type that is dually equivalent to~$\CA$ 
via functors $\D \colon \CA \to \CX$ and $\E \colon \CX  \to \CA$. 
We consider a topological structure $\MT = (M; G,H,R,\Tp)$ where 

\begin{enumerate}

\item[$\bullet$] $\Tp$ is the discrete topology on~$M$;

\item[$\bullet$] $G$ is a set of operations on $M$, meaning that, for $g \in G$ of arity $n \geq 0$, the map $g \colon \M^n \to \M$ is a homomorphism;

\item[$\bullet$] $H$ is a set of partial operations on $M$, meaning that, for $h \in H$ of arity $n \geq 1$, %there exists  $\mathbf{N}_{h}\subseteq\M^n$, the universe of a )subalgebra of $\M^n$, such that the map $h \colon N_{h} \to \M$ is a  homomorphism;
the map $h$ is a homomorphism from a (proper) subalgebra of $\M^n$ into $\M$;
 
\item[$\bullet$]  $R$ is a set of  relations on~$M$ such that if $r\in R$ is $n$-ary   ($n \geq 1$), then~$r$ is the universe of a subalgebra~$\alg{r}$ of~$\M^n$.

\end{enumerate} 

\noindent
We refer to such a topological structure $\MT$ as an \defn{alter ego} for~$\M$ and say that 
$\MT$ and $\M$ are \defn{compatible}. 
(We shall not encounter any alter egos with $H \ne \emptyset $ in the present paper, 
but permitting partial endomorphisms is crucial in certain applications of the 
Test Spaces Method; see~\cite{CM18}.) 

Using $\MT$ we build the desired category~$\CX$ of structured topological spaces. 
We first note that for any non-empty set~$S$ we may equip $M^S$ with 
the product topology and lift the members of $G \cup H \cup R$ pointwise to $M^S$. 
We define $\CX  := \IScP(\MT)$, the class of isomorphic copies of closed substructures of 
non-empty powers of $\MT$ together with the empty structure. 
Here a non-empty power $\MT^S$ of  $\MT$ carries the product topology and 
is equipped with the pointwise liftings of the members of $G\cup H \cup R$. 
Closed substructures and isomorphic copies are defined in the expected  way. 
Hence a member $\X$ of $\CX$ is a structure $(X; G^\X, H^\X, R^\X,\Tp^\X)$ of 
the same type as $\MT$. 
Details are given in~\cite[Section 1.4]{CD98}.  
We make~$\CX$ into  a category by taking all continuous structure-preserving maps 
as the morphisms. 
An \defn{embedding} in $\CX$ is a morphism $\phi \colon \X \to \Y$ such that $\phi(\X)$ 
is a substructure of~$\Y$ and $\phi\colon \X \to \phi(\X)$ is an isomorphism. 

Suppose now that a structured topological space $\MT$ and an algebra~$\M$ are compatible 
and let~$\A \in \CA$ and $\X \in\CX$.  
Then $\CA(\A,\M)$, the set of homomorphisms from $\A$ to $\B$, 
is the universe of a closed substructure of $\MT^{A}$, and $\CX(\X,\MT)$, 
the set of morphisms from $\X$ to $\MT$, is the universe of a subalgebra of $\M^{X}$. 
As a consequence of compatibility, there exist well-defined contravariant hom-functors 
$\D \colon \CA \to \CX$ and $\E \colon \CX  \to \CA$,
\smallskip
\begin{center}
\begin{tabular}{lcl}
\text{on objects:} &\phantom{.}\hspace{2.5cm}\phantom{.}& 
$\D \colon  \A \mapsto  \CA(\A,\M),$ 
\\[.1cm]
\text{on morphisms:}  & &  $\D \colon  x \mapsto - \circ x,$
\\[.1cm]
and 
\\[.1cm]
\text{on objects:} & & $\E  \colon  \X \mapsto  \CX (\X,\MT), $
\\[.1cm]
\text{on morphisms:}  & &$\E  \colon  \phi \mapsto - \circ \phi.$
\end{tabular}
\end{center}
\smallskip
For $\A \in \CA$ we refer to $\D(\A)$ as the \defn{{\rm (}natural{\rm )} dual space of} $\A$. 

Given $\A\in \CA$ and $\X \in \CX$,  there exist natural evaluation maps 
$\esubA \colon a \mapsto -\circ a$ and $\epsub{\X} \colon x \mapsto -\circ x$, 
with $\esubA \colon \A \to \E\D(\A)$ and $\epsub{\X}\colon \X \to \D\E(\X)$. 
Moreover,  $(\D,\E,e,\epsilon)$ is a dual adjunction (see~\cite[Chapter~2]{CD98}). 
Each of the maps $\esubA$ and $\epsub{\X} $ is an embedding.  
We say that~$\MT$ \defn{yields a duality on}~$\CA$ if each $\esubA$ is also surjective. 
If in addition each $\epsub{\X}$ is surjective and so an isomorphism, 
we say that the duality yielded by~$\MT$ is \defn{full}. 
In this case, $\CA$ and~$\CX$ are dually equivalent. 
Let us note already  here a fact which will be of key importance. 
A dualising alter ego~$\MT$ plays a special role in the duality it sets up: 
it is the dual space of the free algebra on one generator in~$\CA$. 
More generally, the free algebra generated by a non-empty set~$S$ has dual space~$\MT^S$.

The classes of algebras we consider in this paper are all lattice-based. 
In this setting, the \textit{existence} of some dualising alter ego (with $H = \emptyset$) 
is ensured by the NU Duality Theorem; see~\cite[Chapter~2,  Theorem 3.4]{CD98}. 
Moreover, $\M$ will be a finite distributive lattice with additional operations.  
Priestley duality for distributive lattices provides a prototypical natural duality 
in which $\M$ is the lattice with universe $\{ 0,1\}$ and $\MT$ is the discretely topologised 
poset with $0 < 1$; the members of the dual category are \defn{Priestley spaces}. 
(See~\cite{CD98} for details.) 
For many varieties of distributive lattice-based algebras, it is possible to find a 
dually equivalent category whose objects are certain Priestley spaces 
equipped with additional structure.  
Rather imprecisely, we refer to such equivalences as restricted Priestley dualities. 
We stress that these dual representations are seldom natural dualities. 
However, in a few  cases they can be recast as such; De Morgan, Stone, 
and double Stone algebras all have this special feature. 
In such cases we can take  advantage of basic facts about both types of duality 
to facilitate calculations. 
In particular, the cardinalities of free algebras are easy to compute. 
(Let us note also that restricted Priestley dualities can be useful tools even when 
they are not natural. Indeed they can be exploited to good effect in the construction of 
alter egos.)
 
\begin{DMex}  \label{DMex1}  
A topological duality for De Morgan algebras was originally developed in~\cite{CF77} 
in the guise of a restricted Priestley duality. 
The same result viewed from a natural duality perspective may be 
found in~\cite[Chapter 4, 3.15]{CD98}. 

Recall that $\class{DM}$ is generated as a quasivariety by the algebra $\alg{D_4}$ with 
universe $\{0,a,b,1\}$, as depicted in part~\ref{DMex0} of this running example. 
Let us consider now $\twiddle{\alg{D_4}}=(\{0,a,b,1\};\preccurlyeq,g)$ as shown in 
Fig.~\ref{fig:DMalterego}, where $\preccurlyeq$ is the partial order 
and~$g$ is the indicated order-reversing map. 
It is easy to check that $\preccurlyeq$ is the universe of a subalgebra of $\alg{D_4}^2$ 
and that $g\colon D_4\to D_4$ corresponds to the only non-identity 
endomorphism of~$\alg{D_4}$.  
Hence $\alg{D_4}$ and $\twiddle{\alg{D_4}}$ are compatible. 
Moreover, as proved in~\cite[Chapter 4, 3.15]{CD98}, $\twiddle{\alg{D_4}}$ yields a strong 
duality on $\class{DM}$.

\begin{figure} [ht]
\begin{center}
	\begin{tikzpicture}[scale=.8]  	
		%%%%% D4
		\node [label=above:{$a$}](D1) at (0,3) {};   
		\node [label=below:{$b$}](D2) at (0,1) {}; 
		\node  (D3) at (-1,2) {};  
		\node (D4) at (1,2) {}; 
		\node [xshift=-15pt] at (D3) {$0$};
		\node [xshift=15pt] at (D4) {$1$};
		
		\draw (D1) circle [radius=2pt];
		\draw (D2) circle [radius=2pt];
		\draw (D3) circle [radius=2pt];
		\draw (D4) circle [radius=2pt];

\draw [<->]  (D1) to (D2);
\draw [<-]  (D3) .. controls +(-.5,.4) and +(-.5,-.4) .. (D3);
\draw [<-]  (D4) .. controls +(.5,.4) and +(.5,-.4) .. (D4);

		\draw [shorten <=-2pt, shorten >=-2pt] (D1) -- (D3);
		\draw [shorten <=-2pt, shorten >=-2pt] (D1) -- (D4);
		\draw [shorten <=-2pt, shorten >=-2pt] (D2) -- (D3);
		\draw [shorten <=-2pt, shorten >=-2pt] (D2) -- (D4);
\end{tikzpicture}
\end{center}
\caption{The alter ego of $\alg{D_4}$ \label{fig:DMalterego}}
\end{figure}
\end{DMex}

We now begin to home in  on those aspects of duality theory that underpin this paper.
Our objective in setting up a natural duality for a quasivariety~$\CA$ is to thereby 
transfer algebraic problems about~$\CA$ into problems about the dual category~$\CX$ 
using the hom-functors~$\D$ and~$\E$ to toggle backwards and forwards.  
If we have a  duality,  we can identify any  $\A\in \CA$ with its second dual~$\E\D(\A)$.  
Making such identifications leads also to identifications of morphisms with their second duals.     
In addition---and this will be crucial for our Test Spaces Method---provided the duality is full, 
we  can realise any $\X \in \CX$, up to isomorphism, as $\D(\A) $ 
for some $\A\in\CA$.

%%%%%%%%%%%%%%%%%%%%%%%%%%%%%

Our needs  are very specific, relating  to the material in Section~\ref{sec:admiss}.  
We have already mentioned the dual spaces of free algebras.
We also require dual characterisations  of injective homomorphisms and 
surjective homomorphisms. 
This is not a triviality since, for  morphisms in~$\CX$,  epi (mono) may not equate 
to surjective (injective).  
As the discussion of $\proc{MinGenSet}$ in Section~\ref{sec:admiss} foretells, 
we also need to express notions and results concerning $\CA$-congruences in dual form. 

All the special properties we need will hold so long as our duality is \defn{strong}.   
Most concisely, a duality between $\CA = \ISP(\M)$ and $\CX = \IScP(\twiddle{\M})$ is \defn{strong} if 
$\twiddle{\M}$ is injective in~$\CX$. 
The technical details, and various equivalent definitions, need not concern us here 
(they can be found in~\cite[Chapter~3]{CD98}). 
We shall exploit without  proof two key facts. 
The first is that any strong duality is full.  
The second is that each of~$\D$ and $\E$ has the property that it converts 
embeddings to surjections and surjections to embeddings. 
In the case that~$\M$ is a finite lattice-based algebra, the existence of  an alter ego
which  yields a strong duality is guaranteed.  
Indeed, strongness  may be achieved by enriching a dualising alter ego by adding suitable
partial operations. 
However, for the quasivarieties we consider in this paper,
the known dualities we call on have no partial operations in their alter egos and  
are already strong.

%%%%%%%%%%%%%%%%%%%%%%%%%%%%%%%%%%%%%%%%%%%%%%%%%%

\section{Dual formulation of \proc{MinGenSet}}\label{Sec:MGS}

Throughout the rest of the paper $\M$ denotes  a finite algebra and $\CA=\ISP(\M)$ 
the quasivariety it generates. 
Assume also that the structure $\twiddle{\M}$ yields a strong duality on $\ISP(\M)$ 
with associated contravariant functors 
$\D\colon \CA\to \IScP(\twiddle{\M})$ and $\E\colon\IScP(\twiddle{\M})\to \CA$. 
Because all the algebras we work with are finite, topology plays no overt role.

Our aim in this section is to dualise the procedure $\proc{MinGenSet}$ presented in Section~\ref{algos}. 
To achieve this, we need to capture in dual form the lattice of $\Q$-congruences
of an algebra, for some quasivariety $\Q\subseteq\CA$. As a first  step, we spell out in detail the dual characterisation of $\CA$-congruences 
\cite[Chapter 3, Theorem.2.1]{CD98} as this applies to finite algebras.  

\begin{prop}  \label{prop:cong-dualchar}
Let  $\A$ be a finite algebra in $\CA=\ISP(\M)$ and let ${\X} = \D(\A)$.  

\begin{enumerate}

\item[\rm (a)]
For any congruence $\theta$ of~$\A$ such that $\A/\theta \in \CA$,  there exists a substructure of~$\X$ given by $\Z_\theta = \im \D(f)$, where $f \in \CA(\A, \A/\theta)$ is the natural projection.

\item[\rm (b)]
For any substructure $\Z$ of\, $\X$, there exists an $\CA$-congruence $\theta_{\Z}$ of~$\A$  
 given by 
\[
 (a,b) \in \theta_{\Z} \ \Leftrightarrow \ z(a) = z(b)\text{ for all } z \in Z .
\] 
\end{enumerate}
The correspondence  set up by  $\theta \mapsto \Z_\theta$ and $\Z\mapsto \theta_{\Z}$ 
defines a dual order-isomorphism between the 
$\CA$-congruences of  $\A$ and the  substructures of\, $\X$, both ordered by inclusion.
\end{prop}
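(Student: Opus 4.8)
The plan is to obtain this as a concrete finite-algebra reading of the general duality-theoretic correspondence \cite[Chapter 3, Theorem 2.1]{CD98}, checking that the two assignments $\theta\mapsto\Z_\theta$ and $\Z\mapsto\theta_\Z$ are well-defined, mutually inverse, and order-reversing. First I would verify that part~(a) makes sense: given an $\CA$-congruence $\theta$ of $\A$, the quotient map $f\colon\A\to\A/\theta$ is a surjection in $\CA$, so, since the duality is strong (hence $\D$ converts surjections to embeddings), $\D(f)\colon\D(\A/\theta)\to\D(\A)=\X$ is an embedding of structures; its image $\Z_\theta=\im\D(f)$ is therefore (isomorphic to) a substructure of $\X$, and because all algebras here are finite the topological side is trivial. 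For part~(b), given a substructure $\Z$ of $\X$, I would observe that the prescribed relation $\theta_\Z$ is exactly the kernel of the evaluation-type map $\A\to\E(\Z)$, $a\mapsto(z\mapsto z(a))$, which is a homomorphism into $\E(\Z)\in\CA$; hence $\theta_\Z$ is a congruence and $\A/\theta_\Z$ embeds into $\E(\Z)\in\CA$, so $\A/\theta_\Z\in\CA$, i.e.\ $\theta_\Z$ is an $\CA$-congruence.

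Next I would establish that the two maps are inverse to each other. For $\theta_{\Z_\theta}=\theta$: using the identification $\A\cong\E\D(\A)$ provided by the (full, since strong) duality and the fact that $e_\A$ is an isomorphism, $(a,b)\in\theta_{\Z_\theta}$ iff every $z\in\Z_\theta=\im\D(f)$ agrees on $a$ and $b$; writing $z=\D(f)(w)=w\circ f$ for $w\in\D(\A/\theta)$, this says $w(f(a))=w(f(b))$ for all $w$, and since $e_{\A/\theta}$ is injective (the points of $\D(\A/\theta)$ separate the points of $\A/\theta$) this is equivalent to $f(a)=f(b)$, i.e.\ $(a,b)\in\theta$. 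For $\Z_{\theta_\Z}=\Z$: here $\A/\theta_\Z$ is, via the construction above, isomorphic to the subalgebra $\im(a\mapsto(z\mapsto z(a)))$ of $\E(\Z)$, and the natural projection $\A\to\A/\theta_\Z$ dualises to an embedding whose image one identifies with $\Z$; the key point is that $\Z$, being a closed (here just a) substructure of $\X=\D(\A)=\D\E\D(\A)$ and with $\epsilon_\Z$ an isomorphism by fullness, is recovered as $\D(\E(\Z))\hookrightarrow\X$, and this embedded copy equals $\Z_{\theta_\Z}$.

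Finally, order-reversal is immediate: if $\theta_1\subseteq\theta_2$ then $\A/\theta_2$ is a quotient of $\A/\theta_1$, so dualising the composite of surjections gives $\Z_{\theta_2}\subseteq\Z_{\theta_1}$; conversely, from the defining formula in~(b), $\Z\subseteq\Z'$ forces $\theta_{\Z'}\subseteq\theta_\Z$. Combined with the bijection, this yields the claimed dual order-isomorphism.

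The main obstacle I anticipate is not any single deep step but rather keeping the identifications honest: $\Z_\theta$ is literally $\im\D(f)$, a substructure of $\X$ only up to the isomorphism $\D(f)\colon\D(\A/\theta)\to\im\D(f)$, and matching ``$\A/\theta_\Z$ embeds into $\E(\Z)$'' with ``$\Z_{\theta_\Z}=\Z$'' requires carefully tracking how $\epsilon_\Z$ and $e_\A$ interact under $\D$ and $\E$. Once one commits to working inside $\E\D(\A)\cong\A$ and uses strongness (so that $\D$, $\E$ genuinely swap surjections and embeddings and $\epsilon_\Z$ is an isomorphism for every $\Z\in\CX$), these bookkeeping issues resolve cleanly, and nothing beyond \cite[Chapter 3]{CD98} is needed.
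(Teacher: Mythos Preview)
Your proposal is correct and follows exactly the route the paper takes: the paper does not give a proof at all but simply presents the proposition as a spelling-out, for finite algebras, of \cite[Chapter~3, Theorem~2.1]{CD98}, which is precisely the source you invoke and unpack. Your detailed verification of well-definedness, mutual inversion, and order-reversal is sound and amounts to a careful reading of that cited result under the standing assumption of a strong duality.
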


It follows from Proposition~\ref{prop:cong-dualchar} that the family of substructures of $\X = \D(\A)$  is itself a (complete) lattice with respect to the inclusion order.
%%%%%%%%%%%%%%%%%%%%%%%%%

Let us recall that to check admissibility in the quasivariety $\ISP(\M)$ we aim to find 
the minimal set of generators of the quasivariety $\Q=\ISP(\F_{\ISP(\M)}(s))$, 
where $s$ is the cardinality of a set of generators of $\M$. Algebraically, this involves 
applying \proc{MinGenSet} to some finite $\A\in \ISP(\M)$, which requires determining 
$\Con_{\Q}(\A)$. With this in mind, let us first investigate when an algebra in $\ISP(\M)$ belongs to 
the quasivariety generated by another algebra in $\ISP(\M)$.

\begin{prop}\label{lem:Dist}  
Let $\B$ and $\alg{C}$ be finite algebras in $\CA$.
 Then the following are equivalent:
\begin{enumerate}
\item[{\rm (1)}]  $\alg{C}\in \ISP(\B)$;

\item[{\rm (2)}]	there exist homomorphisms $f_1,\ldots, f_m\colon \alg{C} \to \B$ such that $f\colon a \mapsto (f_1(a), \ldots, f_m(a))$ is an injective homomorphism from $\alg{C}$ into $\B^m$;

\item[{\rm (3)}]  there exist  finitely many morphisms, $\phi_1, \ldots, \phi_m\in \CX(\D(\B),\D(\alg{C}))$ such that  
\[
\D(\alg{C})=\langle \im(\phi_1)\cup \cdots\cup\im(\phi_m)\rangle.
\]
\end{enumerate}
\end{prop}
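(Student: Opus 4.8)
The plan is to prove the cycle of implications $(1)\Rightarrow(2)\Rightarrow(3)\Rightarrow(1)$, using the standard facts about strong dualities recalled in Section~\ref{sec:natdual}: that $\D$ and $\E$ convert embeddings to surjections and surjections to embeddings, that a strong duality is full (so $\epsub{\X}$ is an isomorphism for each $\X\in\CX$), and that $\E\D(\A)\cong\A$ for finite $\A$.

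First, $(1)\Rightarrow(2)$ is essentially the definition of $\ISP$ unwound for finite algebras. If $\alg{C}\in\ISP(\B)$, then up to isomorphism $\alg{C}$ is a subalgebra of a power $\B^I$; since $\alg{C}$ is finite it embeds into a finite subpower, and composing the embedding with the finitely many coordinate projections $\B^m\to\B$ yields homomorphisms $f_1,\dots,f_m\colon\alg{C}\to\B$ whose combined map $f=\langle f_1,\dots,f_m\rangle$ is the required injective homomorphism. The converse direction $(2)\Rightarrow(1)$ is equally routine: an injective homomorphism $\alg{C}\hookrightarrow\B^m$ exhibits $\alg{C}\in\ISP(\B)$ directly. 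So the content of the proposition is the equivalence of $(2)$ and $(3)$, which is where duality enters.

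For $(2)\Rightarrow(3)$, apply $\D$ to each $f_i\colon\alg{C}\to\B$ to obtain morphisms $\phi_i:=\D(f_i)\in\CX(\D(\B),\D(\alg{C}))$. The combined map $f\colon\alg{C}\to\B^m$ is injective, i.e.\ an embedding of algebras; since $\D$ sends embeddings to surjections, $\D(f)$ is surjective onto $\D(\alg{C})$. Now one identifies $\D(\B^m)$ with (a power of) $\D(\B)$ and checks that $\D(f)$, under this identification, is the map built coordinatewise from the $\phi_i$ together with the projections; its image is exactly $\langle\im(\phi_1)\cup\cdots\cup\im(\phi_m)\rangle$, the substructure of $\D(\alg{C})$ generated by the union of the images. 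Surjectivity of $\D(f)$ then forces this generated substructure to be all of $\D(\alg{C})$. Conversely, for $(3)\Rightarrow(2)$, given $\phi_i\in\CX(\D(\B),\D(\alg{C}))$ with $\D(\alg{C})=\langle\bigcup_i\im(\phi_i)\rangle$, apply $\E$ to get homomorphisms $f_i:=\E(\phi_i)\colon\E\D(\alg{C})\to\E\D(\B)$, i.e.\ (after the canonical identifications $\E\D(\alg{C})\cong\alg{C}$, $\E\D(\B)\cong\B$) homomorphisms $f_i\colon\alg{C}\to\B$. One then verifies that the combined map $\langle f_1,\dots,f_m\rangle\colon\alg{C}\to\B^m$ is injective: the assumption that the $\im(\phi_i)$ generate $\D(\alg{C})$ means the corresponding map of dual spaces $\D(\alg{C})\to\D(\B)^m$ coming from the $\phi_i$ is surjective, and since $\E$ converts surjections to embeddings, applying $\E$ yields an embedding, which after the identifications is precisely $\langle f_1,\dots,f_m\rangle$.

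I expect the main obstacle to be purely bookkeeping rather than conceptual: carefully identifying $\D(\B^m)$ with the $m$-fold power of $\D(\B)$ in $\CX$ and tracking how $\D$ acts on the product map $\langle f_1,\dots,f_m\rangle$, so that ``image of $\D(f)$'' genuinely matches ``substructure generated by $\bigcup_i\im(\phi_i)$''. This uses that $\D$, being a hom-functor into a category closed under closed substructures of powers, sends finite products to the appropriate objects and sends a product map to the map induced on the power; the images then combine because a subset of a structure generates the same substructure whether one takes the union first or the join of the individually generated substructures. Once this identification is pinned down, the embedding/surjection conversions do all the real work, and the finiteness hypothesis lets us ignore topology throughout and keep the powers finite.
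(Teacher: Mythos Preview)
There is a variance error running through your $(2)\Leftrightarrow(3)$. Since $\D$ is contravariant, $\D(\B^m)$ is not a power of $\D(\B)$; concretely it is $\CA(\B^m,\M)$, and there is no reason to identify this with $\D(\B)^m$. In your $(3)\Rightarrow(2)$ the slip becomes explicit: the $\phi_i$ run $\D(\B)\to\D(\alg{C})$, so there is no ``map $\D(\alg{C})\to\D(\B)^m$ coming from the $\phi_i$'' to which you could apply~$\E$. What you presumably intend is the dual $\D(f)\colon\D(\B^m)\to\D(\alg{C})$ of the combined map $f=\langle f_1,\dots,f_m\rangle$, and to argue that $(3)$ forces it to be surjective; but that returns you to the same unproved claim you make in $(2)\Rightarrow(3)$.

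That claim---that the image of $\D(f)$ is exactly $\langle\bigcup_i\im(\phi_i)\rangle$---is not bookkeeping but the crux of the matter. It amounts to showing that $\D(\B^m)$ is generated as a substructure by the images $\D(\pi_j)[\D(\B)]$, and the natural way to establish this is precisely the point-separation/congruence argument the paper carries out. The paper therefore never touches $\D(\B^m)$: for $(2)\Rightarrow(3)$ it sets $\Z=\langle\bigcup_i\im(\phi_i)\rangle$, passes to the congruence $\theta_\Z$ via Proposition~\ref{prop:cong-dualchar}, checks that $\theta_\Z\subseteq\bigcap_i\ker f_i$ (each $\phi_i(y)=y\circ f_i$ lies in $Z$, so $z(a)=z(b)$ for all $z\in Z$ forces $f_i(a)=f_i(b)$), and concludes from injectivity of~$f$ that $\theta_\Z=\Delta$, whence $\Z=\D(\alg{C})$. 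The converse reverses this computation. Your route can be salvaged, but once the gap is filled it \emph{is} this argument; the categorical packaging with $\D(f)$ does not shortcut it.
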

\begin{proof} 
The equivalence of (1) and (2) may be seen as a specialisation to finite~$\alg{C}$ of ~\cite[Chapter~1, Theorem~3.1]{CD98}.
It is immediate that (2) implies (1).
Conversely,  if $\alg{C}\in \ISP(\B)$,  then the homomorphisms from~$\alg{C}$ into~$\B$ 
separate the points of~$C$: for $c \ne d$ in~$C$ there exists a homomorphism 
$g_{cd} \colon \alg{C} \to \B$  satisfying  $g_{cd}(c) \ne g_{cd}(d)$.  
Hence~(2) holds, with $m= |C\times C {\setminus} \{(c,c) \mid c \in C\}|$.

Suppose now that (2) holds. Let $\phi_i = \D(f_i)$ for $i =1,\ldots, m$. Then each $\im(\phi_i)$ is contained in $\D(\alg{C})$.  Let $\Z$ be the substructure $\langle \im (\phi_1 ) \cup \ldots \cup \im (\phi_m)\rangle$ of~$\D(\alg{C})$.  Since the duality is full,  there exists $\alg{C}' \in \ISP(\M)$ such that $\D(\alg{C'})\cong \Z$. For each~$i$, 
\[
\D(\B) \overset{\phi_i}{\longrightarrow}  \D(\alg{C}') \overset{\iota}{\hookrightarrow}  \D(\alg{C}),
\]
where~$\iota$ embeds $\D(\alg{C'})$ into~$\D(\alg{C})$. 
Applying the functor $\E$, and making use 
once again 
of the fullness of a strong duality, 
we see that $\alg{C}'$ is 
a homomorphic image of~$\alg{C}$ 
and that each~$f_i$ factors through~$\alg{C}'$.
If~$\Z$ were strictly contained in $\D(\alg{C})$, then Proposition~\ref{prop:cong-dualchar} 
would imply that the associated congruence $\theta_{\Z}$ is non-trivial and
contained in~$\bigcap\ker f_i$, 
contrary to our assumption.
Hence (2) implies (3). 
The converse is obtained essentially by reversing this argument.
\end{proof}

We now combine  Propositions~\ref{prop:cong-dualchar} and~\ref{lem:Dist}   %the two previous results
 to describe in dual terms
the lattice  $\Cong_{\ISP(\B)}(\A)$ for finite $\A,\B\in\CA$.
For this we first need a definition.  
Given finite structures $\X,\Y \in \IScP (\twiddle{\M})$ and a substructure $\Z$ of $\X$, 
let us say that $\str{Z}$ is a \defn{$\Y$-substructure} of $\X$ if there exist morphisms 
$\phi _1,\ldots \phi_m\colon\Y\to \X$ such that 
$\Z=\langle \im(\phi_1)\cup \cdots\cup\im(\phi_m)\rangle$. 

\begin{prop} \label{prop:DualRelCong} 
Let $\A, \B\in\CA$ be finite algebras and let $\Q=\ISP(\B)$.
Then the dual  order-isomorphism between $\Cong (\A)$ and the substructures of $\D(\A)$ restricts to a dual order-isomorphism between the lattice $ \Cong_{\Q}(\A)$ and the subfamily of $\D(\B)$-substructures of $\D(\A)$.   
Moreover these substructures form a lattice
which is  a join
subsemilattice of the lattice of all substructures of $\D(\A)$. 
\end{prop}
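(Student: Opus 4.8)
The statement has three parts, and I would tackle them in order: (i) the bijective correspondence $\theta \mapsto \Z_\theta$ restricts to the claimed sub-correspondence; (ii) the $\Q$-congruences correspond exactly to $\D(\B)$-substructures of $\D(\A)$; (iii) this family is a sublattice, in fact a join-subsemilattice, of the lattice of all substructures of $\D(\A)$. Parts (i) and (ii) are really one argument, obtained by combining Propositions~\ref{prop:cong-dualchar} and~\ref{lem:Dist}; part (iii) is where the genuine content lies.

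\emph{For (i)--(ii)}, the plan is as follows. By Proposition~\ref{prop:cong-dualchar}, $\theta \mapsto \Z_\theta$ is a dual order-isomorphism between all of $\Cong(\A)$ and all substructures of $\X = \D(\A)$; I just need to check that $\theta$ being a $\Q$-congruence matches $\Z_\theta$ being a $\D(\B)$-substructure. So fix $\theta \in \Con(\A)$ with $\A/\theta \in \CA$, let $\alg{C} = \A/\theta$, and let $f\colon \A \to \alg{C}$ be the natural surjection; then $\Z_\theta = \im\D(f)$, and since $\D(f)\colon \D(\alg{C}) \to \D(\A)$ is an embedding (surjections dualise to embeddings under a strong duality), $\Z_\theta \cong \D(\alg{C})$ as structures. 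Now $\theta$ is a $\Q$-congruence iff $\alg{C} = \A/\theta \in \Q = \ISP(\B)$, which by Proposition~\ref{lem:Dist}((1)$\Leftrightarrow$(3)) holds iff $\D(\alg{C}) = \langle \im(\psi_1) \cup \cdots \cup \im(\psi_m)\rangle$ for some $\psi_i \in \CX(\D(\B), \D(\alg{C}))$. Transporting this along the isomorphism $\D(\alg{C}) \cong \Z_\theta \hookrightarrow \X$ — i.e.\ composing each $\psi_i$ with the embedding $\Z_\theta \hookrightarrow \X$ and noting that $\langle - \rangle$ commutes with embeddings — gives precisely that $\Z_\theta$ is a $\D(\B)$-substructure of $\X$. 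This chain of equivalences establishes both (i) and (ii) at once.

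\emph{For (iii)}, I want to show the $\D(\B)$-substructures of $\X$ are closed under the join operation of the substructure lattice, which is $\Z_1 \vee \Z_2 = \langle \im(\text{incl}_1) \cup \im(\text{incl}_2)\rangle = \langle Z_1 \cup Z_2\rangle$. If $\Z_1 = \langle \bigcup_{i=1}^{m} \im(\phi_i)\rangle$ and $\Z_2 = \langle \bigcup_{j=1}^{k}\im(\psi_j)\rangle$ with all $\phi_i, \psi_j \in \CX(\D(\B), \X)$, then the combined family $\phi_1,\dots,\phi_m,\psi_1,\dots,\psi_k$ witnesses that $\langle Z_1 \cup Z_2\rangle$ is again a $\D(\B)$-substructure, since $\langle \langle S_1\rangle \cup \langle S_2\rangle\rangle = \langle S_1 \cup S_2\rangle$. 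Hence the family is closed under finite (hence, by finiteness of $\X$, arbitrary) joins and contains a least element (the substructure generated by the images of the morphisms $\D(\B)\to\X$, or, if there are none, the empty structure, corresponding to the least $\Q$-congruence which exists since $\Q$ is a quasivariety), so it is a complete lattice and a join-subsemilattice of the substructure lattice of $\X$. One should note in passing why this is \emph{not} a meet-subsemilattice in general: the intersection $Z_1 \cap Z_2$ need not be expressible as generated by images of maps from $\D(\B)$, which is the dual reflection of the fact that $\Cong_{\Q}(\A)$ is only a meet-subsemilattice of $\Con(\A)$, not a join-subsemilattice — the order-reversal swaps the two.

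\emph{Main obstacle.} The conceptual steps are short; the one place to be careful is the bookkeeping around ``$\langle-\rangle$ commutes with embeddings'' and the transport of the Proposition~\ref{lem:Dist} witnesses along the isomorphism $\D(\A/\theta) \cong \Z_\theta$. Concretely: a $\D(\B)$-substructure is defined via morphisms $\Y \to \X$ landing inside it, whereas Proposition~\ref{lem:Dist}(3) gives morphisms $\D(\B) \to \D(\alg{C})$ whose images generate all of $\D(\alg{C})$; bridging these requires observing that the embedding $\iota\colon \Z_\theta \hookrightarrow \X$ satisfies $\langle \iota[\im(\psi_i) : i]\rangle = \iota[\langle \im(\psi_i):i\rangle] = \iota[\D(\alg{C})] = \Z_\theta$. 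This is routine once one is explicit that substructure generation is preserved by embeddings, but it is the step where an imprecise argument could go wrong, so I would state it as a small lemma or at least flag it explicitly.
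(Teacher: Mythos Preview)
Your proof is correct, and for the main correspondence (your parts (i)--(ii)) it follows exactly the paper's approach: combine Proposition~\ref{prop:cong-dualchar} with Proposition~\ref{lem:Dist} via the identification $\Z_\theta \cong \D(\A/\theta)$, and the bookkeeping you flag about transporting witnesses along the embedding $\Z_\theta \hookrightarrow \D(\A)$ is precisely the small step the paper handles (somewhat tersely) when it notes $\im(\phi_i) \subseteq \D(\alg{C})$.

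For part (iii) there is a minor but genuine difference in route. The paper, having just established the dual order-isomorphism between $\Cong_\Q(\A)$ and the $\D(\B)$-substructures, simply transfers across it the already-stated fact that $\Cong_\Q(\A)$ is a meet-subsemilattice of $\Cong(\A)$; the order reversal turns meet into join and the lattice property comes for free. Your direct argument on the dual side---combining the two witness families and using $\langle\langle S_1\rangle \cup \langle S_2\rangle\rangle = \langle S_1 \cup S_2\rangle$---is equally valid and more self-contained, but it then forces you into the slightly awkward discussion of a least element and non-emptiness, which the paper's transfer argument sidesteps entirely. Either approach is fine; the paper's is one line shorter, yours is more explicit about \emph{why} joins are preserved.
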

 %%%%%%%%%%%%%%%

\begin{proof}  
We claim that a substructure $\Z$ of~$\D(\A)$ is a $\D(\B)$-substructure of~$\D(\A)$ 
if and only if $\A/\theta_\Z \in\Q$. 
The right-to-left direction follows by applying 
Proposition~\ref{lem:Dist},  (1) implies (3),  with  $\alg{C}$ as $\A/\theta_\Z \in \Q$.
For the other direction, suppose that $\Z$ is a $\D(\B)$-substructure of~$\D(\A)$.   
In particular, $\Z$ is a substructure of $\D(\A)$ and there exists some 
$\alg{C} \in \CA$ for which $\Z = \D(\alg{C})$.  But then there exists a surjective homomorphism $f$  from $\A$ onto~$\alg{C}$, so  $\A/\theta_\Z \in \CA$.
   Moreover,
since $\Z$ is a $\D(\B)$-substructure, 
there exist morphisms $\phi_i \colon \D(\B) \to \D(\A)$ for which
$\Z = \langle\im (\phi_1) \cup  \cdots \cup \im ( \phi_m)\rangle$.  
Observe that $\im (\phi_i) \subseteq \D(\alg{C})$. An application of Proposition~\ref{lem:Dist},  (3) implies (1),  with  $\alg{C}$ as~$\A/\theta_\Z$ yields $\A/\theta_\Z \in \Q$.

The final assertion follows from  the fact that $\Cong_{\Q}(\A)$ is  
a meet subsemilattice of $\Cong(\A)$.
\end{proof}

Finally, the following consequences of Proposition~\ref{prop:DualRelCong} 
constitute our main tool for obtaining the dual spaces of $\ISP(\B)$-subdirectly irreducible algebras.

\begin{coro}\label{cor:DualSIrr1}
Let $\B\in\CA$ be a finite algebra and let $\Q=\ISP(\B)$. If $\X$ is join-irreducible in the lattice of 
$\D(\B)$-substructures of $\D(\B)$, then $\E(\X)$ is $\Q$-subdirectly irreducible. 
\end{coro}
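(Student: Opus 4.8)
The plan is to exploit the dual order-isomorphism from Proposition~\ref{prop:DualRelCong} between $\Cong_{\Q}(\A)$ and the family of $\D(\B)$-substructures of $\D(\A)$, specialised to the case $\A = \E(\X)$, where $\X$ is the given join-irreducible $\D(\B)$-substructure of $\D(\B)$. First I would observe that since $\X$ is itself a substructure of $\D(\B)$ and the duality is strong (hence full), $\X \cong \D(\E(\X))$, so we may identify $\D(\E(\X))$ with $\X$ and the $\D(\B)$-substructures of $\D(\E(\X))$ with the $\D(\B)$-substructures of $\X$. Thus Proposition~\ref{prop:DualRelCong} gives a dual order-isomorphism between $\Cong_{\Q}(\E(\X))$ and the lattice of $\D(\B)$-substructures of $\X$.

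Next I would translate ``$\Q$-subdirectly irreducible'' into this dual language. Recall from Section~\ref{algos} that a finite algebra $\A$ is $\Q$-subdirectly irreducible precisely when $\Delta_{\A}$ is (completely) meet-irreducible in $\Cong_{\Q}(\A)$. Under the dual order-isomorphism of Proposition~\ref{prop:DualRelCong}, $\Delta_{\A}$ corresponds to the top element of the lattice of $\D(\B)$-substructures of $\X$, namely $\X$ itself; and a dual order-isomorphism sends meet-irreducible elements to join-irreducible elements. Hence $\E(\X)$ is $\Q$-subdirectly irreducible if and only if $\X$ is join-irreducible in the lattice of $\D(\B)$-substructures of $\X$. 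The hypothesis is exactly that $\X$ is join-irreducible in the lattice of $\D(\B)$-substructures of $\D(\B)$, so the only remaining point is to check that join-irreducibility of $\X$ within the larger lattice (substructures of $\D(\B)$) descends to join-irreducibility within the sublattice of substructures of $\X$: if $\X = \bigvee_{i} \Z_i$ with each $\Z_i$ a proper $\D(\B)$-substructure of $\X$, then the same $\Z_i$ are $\D(\B)$-substructures of $\D(\B)$ and their join (computed in either lattice, since $\X$ is the top of the smaller one) is $\X$, contradicting join-irreducibility in the larger lattice.

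The one genuine subtlety — and the step I expect to need the most care — is verifying that the relevant joins agree, i.e.\ that the lattice of $\D(\B)$-substructures of $\X$ really is (the principal ideal determined by $\X$ in) the lattice of $\D(\B)$-substructures of $\D(\B)$, and in particular that a $\D(\B)$-substructure of $\X$ is the same thing as a $\D(\B)$-substructure of $\D(\B)$ that happens to be contained in $\X$. This uses that the witnessing morphisms $\phi_i \colon \D(\B) \to \Y$ in the definition of $\Y$-substructure have the same codomain issues: a morphism into $\X$ composes with the inclusion $\X \hookrightarrow \D(\B)$ to give a morphism into $\D(\B)$ with the same image, and conversely a morphism $\D(\B) \to \D(\B)$ whose image lies in $\X$ corestricts to a morphism into $\X$. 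Since the generated substructure $\langle \im(\phi_1) \cup \cdots \cup \im(\phi_m)\rangle$ depends only on the union of the images, the two notions coincide, and the joins in the two lattices agree on elements below $\X$. With this identification in hand the corollary follows immediately from Proposition~\ref{prop:DualRelCong} together with the characterisation of $\Q$-subdirect irreducibility via meet-irreducibility of $\Delta$.
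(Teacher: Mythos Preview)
Your proposal is correct and follows essentially the same route as the paper's proof: use Proposition~\ref{prop:DualRelCong} (with $\A = \E(\X)$, identifying $\D(\E(\X))$ with $\X$) to translate $\Q$-subdirect irreducibility of $\E(\X)$ into join-irreducibility of $\X$ among the $\D(\B)$-substructures of $\X$, and then observe that the $\D(\B)$-substructures of $\X$ are exactly the $\D(\B)$-substructures of $\D(\B)$ contained in~$\X$. The paper dispatches this in two sentences; your version simply spells out the identifications (that $\X \cong \D(\E(\X))$, that morphisms into $\X$ correspond to morphisms into $\D(\B)$ with image in $\X$, and that joins below $\X$ coincide in the two lattices) which the paper leaves implicit.
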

\begin{proof}
It follows from Proposition~\ref{prop:DualRelCong} that $\E(\X)$ is $\Q$-subdirectly irreducible if and only if $\X$ is join-irreducible in 
the lattice of $\D(\B)$-substructures of $\X$. Since  $\Y$ is in the lattice of $\D(\B)$-substructures of $\X$ if and only if $Y\subseteq X$  and $\Y$ is a $\D(\B)$-substructure of $\D(\B)$, the result follows.
\end{proof}

Given a finite  structure $\X\in \IScP(\twiddle{\M})$, let $\mathcal{S}_\X$ denote the lattice of $\X$-substructures of $\X$.

\begin{coro}\label{cor:DualSIrr2}
Let $\B\in\CA$ be a finite algebra and let $\Q=\ISP(\B)$. Then 
\[\Q=\ISP(\{\E(\Y)\mid \Y \mbox{ is maximal join-irreducible in $\mathcal{S}_{\D(\B)}$}\}).\] 
\end{coro}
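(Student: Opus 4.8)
The plan is to derive Corollary~\ref{cor:DualSIrr2} from Corollary~\ref{cor:DualSIrr1} together with the classical fact, recorded in the excerpt as \cite[Corollary~6]{XC80}, that a finitely generated quasivariety $\Q = \ISP(\K)$ is generated by the $\Q$-subdirectly irreducible quotients of its given generators: $\Q = \ISP(\{\A/\theta \mid \A \in \K \text{ and } \theta \text{ meet-irreducible in } \Cong_\Q(\A)\})$. Here $\K = \{\B\}$, so the algebraic statement reads $\Q = \ISP(\{\B/\theta \mid \theta \text{ meet-irreducible in } \Cong_\Q(\B)\})$.

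The first step is to translate this into dual language. By Proposition~\ref{prop:DualRelCong} applied with $\A = \B$, the lattice $\Cong_\Q(\B)$ is dually order-isomorphic to the lattice of $\D(\B)$-substructures of $\D(\B)$, which is precisely $\mathcal{S}_{\D(\B)}$ by definition of that notation. A dual order-isomorphism sends meet-irreducible elements to join-irreducible elements, so $\theta$ is meet-irreducible in $\Cong_\Q(\B)$ exactly when the corresponding substructure $\Z_\theta$ is join-irreducible in $\mathcal{S}_{\D(\B)}$; moreover under this correspondence $\E(\Z_\theta) \cong \B/\theta$, using that $\Z_\theta = \im \D(f)$ for $f$ the projection $\B \to \B/\theta$ and that $\E\D \cong \mathrm{id}$ on finite algebras (fullness of the strong duality). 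Hence the generating set $\{\B/\theta : \theta \text{ m.i.}\}$ is, up to isomorphism, $\{\E(\Y) : \Y \text{ join-irreducible in } \mathcal{S}_{\D(\B)}\}$, giving $\Q = \ISP(\{\E(\Y) \mid \Y \text{ join-irreducible in } \mathcal{S}_{\D(\B)}\})$.

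The second step is to cut this down from all join-irreducibles to the \emph{maximal} ones. If $\Y \in \mathcal{S}_{\D(\B)}$ is join-irreducible but not maximal among such, pick a maximal join-irreducible $\Y'$ with $\Y \subseteq \Y'$ (possible since $\mathcal{S}_{\D(\B)}$ is finite). Then $\Y$ is a substructure of $\Y'$ and an easy check shows $\Y$ is a $\D(\B)$-substructure of $\Y'$ — indeed the morphisms $\phi_i \colon \D(\B) \to \D(\B)$ witnessing $\Y = \langle\bigcup \im\phi_i\rangle$ corestrict to $\Y'$ since $\im\phi_i \subseteq \Y \subseteq \Y'$. So $\Y$ lies in the lattice of $\D(\B)$-substructures of $\Y'$; by Corollary~\ref{cor:DualSIrr1} and its proof this lattice is dually isomorphic to $\Cong_{\Q}(\E(\Y'))$, whence $\E(\Y)$ is a proper homomorphic image of $\E(\Y')$, i.e. $\E(\Y) \in \ope{H}(\E(\Y'))$. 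Applying $\E$ to the embedding $\Y \hookrightarrow \Y'$ and strongness (converts embeddings to surjections) gives the surjection $\E(\Y') \twoheadrightarrow \E(\Y)$ directly. In any case $\E(\Y) \in \ISP(\{\E(\Y') : \Y' \text{ maximal join-irreducible}\})$, since $\ISP$ absorbs homomorphic images that already lie in $\CA$ only when... — more carefully: we do not need $\ope{H}$-closure, only that each $\E(\Y)$ for $\Y$ join-irreducible is a quotient of some $\E(\Y')$ with $\Y'$ maximal join-irreducible, and any quotient lying in $\Q$ of an algebra in a generating set does not enlarge the generated quasivariety — wait, that is false in general. The correct route: since $\Q = \ISP(\{\E(\Y) : \Y \text{ j.i.}\})$ it suffices to show each such $\E(\Y)$ lies in $\ISP(\{\E(\Y') : \Y' \text{ max.\ j.i.}\})$. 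This holds because $\E(\Y)$ is a quotient of $\E(\Y')$ \emph{and} $\E(\Y) \in \Q$; but a quotient that lands back in the quasivariety need not be in $\ISP$ of the original — so instead I argue at the level of generation: $\Q$ is generated by the max.\ j.i.\ $\Y'$'s already, because every j.i.\ substructure sits inside a maximal one and Corollary~\ref{cor:DualSIrr1} guarantees each maximal j.i.\ yields a $\Q$-subdirectly irreducible algebra, while \cite[Corollary~6]{XC80} shows $\Q$ is generated by \emph{any} set containing, for each $\A$ in a generating family, enough quotients — precisely the subdirectly irreducible ones, and these are exactly the $\E(\Y)$ with $\Y$ maximal j.i.\ (a $\Q$-s.i.\ algebra has a completely meet-irreducible bottom congruence, corresponding to a \emph{maximal} proper substructure in the dual, hence a maximal j.i.\ element of $\mathcal{S}_{\D(\B)}$).

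The main obstacle, and the place to be careful, is exactly this last reconciliation: showing that the $\Q$-subdirectly irreducible quotients of $\B$ correspond dually not just to join-irreducible elements of $\mathcal{S}_{\D(\B)}$ but to the \emph{maximal} ones. This is where I would invoke that $\D(\B)$ is the dual of $\B$ itself, so $\Cong_\Q(\B) \cong \mathcal{S}_{\D(\B)}^{\partial}$ \emph{with top element corresponding to the empty/trivial substructure}; a nontrivial $\Q$-congruence $\theta$ with $\B/\theta$ $\Q$-s.i.\ is completely meet-irreducible in $\Cong_\Q(\B)$, which dually says $\Z_\theta$ is a \emph{maximal} element of $\mathcal{S}_{\D(\B)} \setminus \{\D(\B)\}$ that is join-irreducible — and one checks the maximal join-irreducibles are precisely these. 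Combining with \cite[Corollary~6]{XC80} to guarantee the s.i.\ quotients suffice to generate $\Q$ then yields $\Q = \ISP(\{\E(\Y) : \Y \text{ maximal join-irreducible in } \mathcal{S}_{\D(\B)}\})$, as required. I expect the bookkeeping around which substructures are ``maximal'' versus ``maximal proper'' (the full structure $\D(\B)$ is itself trivially a $\D(\B)$-substructure and is the top, not join-irreducible) to be the only genuinely delicate point; everything else is a direct transcription through the dual equivalence.
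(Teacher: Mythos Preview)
Your proposal contains a genuine gap in the passage from ``all join-irreducibles'' to ``maximal join-irreducibles.'' The translation via Proposition~\ref{prop:DualRelCong} and \cite[Corollary~6]{XC80} is fine and does give
\[
\Q = \ISP\bigl(\{\E(\Y)\mid \Y \text{ join-irreducible in } \mathcal{S}_{\D(\B)}\}\bigr),
\]
but your attempt to cut this down fails at the key step. You claim that the $\Q$-subdirectly irreducible quotients of $\B$ correspond to the \emph{maximal} join-irreducibles of $\mathcal{S}_{\D(\B)}$. This is false: $\B/\theta$ is $\Q$-subdirectly irreducible iff $\theta$ is completely meet-irreducible in $\Cong_\Q(\B)$, and under a dual order-isomorphism completely meet-irreducible elements go to completely \emph{join}-irreducible elements (those with a unique lower cover), not to \emph{maximal} join-irreducibles. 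In a chain of length four, for instance, the two middle elements are both completely join-irreducible but only one is maximal. Your parenthetical justification (``completely meet-irreducible bottom congruence, corresponding to a maximal proper substructure in the dual'') conflates ``$\Y$ has a unique maximal proper $\D(\B)$-substructure'' (i.e.\ $\Y$ is join-irreducible) with ``$\Y$ is maximal among join-irreducibles in $\mathcal{S}_{\D(\B)}$''. Your earlier attempts via $\E(\Y)\in\ope{H}(\E(\Y'))$ you correctly abandon, since homomorphic images need not lie in an $\ISP$-class.

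The paper's proof sidesteps all of this and does not invoke \cite{XC80} or Corollary~\ref{cor:DualSIrr1} at all. It argues directly: each $\E(\Y)$ is trivially in $\Q$, so only $\B\in\Q':=\ISP(\{\E(\Y)\mid \Y\text{ max.\ j.i.}\})$ needs checking. Since $\mathcal{S}_{\D(\B)}$ is a finite lattice with top element $\D(\B)$, the top is the join of the maximal join-irreducibles $\X_1,\ldots,\X_n$; because $\mathcal{S}_{\D(\B)}$ is a join-subsemilattice of the substructure lattice, this join is $\langle \X_1\cup\cdots\cup\X_n\rangle$. Dually, $\Delta_\B=\theta_{\D(\B)}=\theta_{\X_1}\cap\cdots\cap\theta_{\X_n}$, so $\B$ embeds into $\E(\X_1)\times\cdots\times\E(\X_n)$ and hence lies in $\Q'$. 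This is precisely the missing ingredient in your argument, and any repair of your approach collapses back to it.
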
  
\begin{proof}
Since $\E(\X)\in\Q$ for each $\D(\B)$-substructure $\X$ of $\D(\B)$, we only need to prove that ${\B\in\Q':=\ISP(\{\E(\Y)\mid \Y \mbox{ is maximal join-irreducible in $\mathcal{S}_{\D(\B)}$}\})}$.

Let $\X_1,\ldots,\X_n$ be the set of maximal join-irreducible elements in $\mathcal{S}_{\D(\B)}$. 
Since $\D(\B)$ is itself a $\D(\B)$-substructure, it follows that $\D(\B)=\langle \X_1\cup\cdots\cup\X_n\rangle$. Hence the congruence $\theta_{\D(\B)}$ is equal to $\theta_{\X_1}\cap\cdots\cap\theta_{\X_n}$. Finally, observe that if $\theta_{\D(\B)} = \Delta_\B$, then $\B$ embeds into $\E(\X_1)\times\cdots\times\E(\X_n)\in\Q'$.
\end{proof}

Using these results we obtain a dual version of \proc{MinGenSet} applied to $\Q=\ISP(\B)$ with $\B\in \CA$. 
Letting $\X=\D(\B)$, proceed as follows:

\begin{enumerate}
\item[1.] \texttt{Determine the set $\mathcal{S}_\X$.}
\item[2.] \texttt{Calculate the set $\mathcal{V}$  of maximal  join-irreducible elements of $\mathcal{S}_{\X}$.}
\item[3.]  \texttt{Repeatedly remove from $\mathcal{V}$ any structure that is a morphic image of another structure in $\mathcal{V}$.}
\end{enumerate}

Combining Theorem~\ref{t:MinGenSet} and Corollaries~\ref{cor:DualSIrr1} and~\ref{cor:DualSIrr2}, we obtain that the set $\{\E(\Y)\mid \Y\in\mathcal{V}\}$ is the minimal set of generators for $\Q$.

%%%%%%%%%%%%%%%%%%%%%%%%%%%%%%%%%%%%%%%%%%%%%%%%%%%%%%%%%%%%%%%%%%

\section{Dual formulation of \proc{SubPreHom}}\label{Sec:SPH}

Let us assume now and for the rest of the paper that $\M$ 
is generated by $s$ elements and no fewer. 
The aim of the algorithm $\proc{SubPreHom}$ is to provide an 
algebra
 $\A$ that is a subalgebra of $\F_\M(s)$ and has $\M$ as a homomorphic image: 
 that is, we seek an embedding $i \colon \A \to \F_\M(s)$ and a surjective homomorphism 
 $h \colon \A \to \M$.  
 In symbols: 
\[
\M \xdoubleheadleftarrow{\ h \ }  \A  \xhookrightarrow{\ i \ } \F_\M(s). 
\]
Note that any $\A\in \CA$ which has $\M$ as a homomorphic image has an
$s$-generated subalgebra $\B$ which also has $\M$ as a homomorphic image. 
Hence we may assume without loss of generality that $\A$ is a quotient of $\F_{\M}(s)$. 

Our first goal will be to describe the set of algebras~$\A \in \CA$ with these properties using 
the strong duality for $\CA$. 
Suppose that $\X$ is a substructure of $\twiddle{\M}^s$ and that there exist  a 
surjective morphism $\gamma\colon \twiddle{\M}^s \to \X$ and an 
embedding $\eta\colon\D(\M)\to \X$. 
In symbols:
\[
  \D(\M) \xhookrightarrow{\ \eta \ }\alg{X} \xdoubleheadleftarrow{\ \gamma\ } \twiddle{\M}^s.
\]
We will refer to the triple $(\X,\gamma,\eta)$ as a \defn{Test Space configuration}, or \defn{{\rm TS}-configuration} for short. 

\begin{prop}\label{Prop:TSMJustification}
Let  $(\X,\gamma,\eta)$ be a Test Space configuration. Then  $\ISP(\F_\M(s))= \ISP(\E(\X))$ and the following are equivalent:
\begin{enumerate}
\item[{\rm (1)}]		$\Sigma \imp \f\eq\p$ is admissible in $\CA$;
\item[{\rm (2)}]		$\Sigma \mdl{\E(\X)} \f\eq\p$.
\end{enumerate}
\end{prop}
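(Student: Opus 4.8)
The plan is to show first that $\ISP(\E(\X)) = \ISP(\F_\M(s))$ and then derive the equivalence of (1) and (2) from the characterisation~\eqref{Eq:AdmFree} together with the fact (noted after Proposition~14 of~\cite{MR13}, recalled in Section~\ref{sec:admiss}) that admissibility in $\CA$ is equivalent to validity in a class $\K'$ precisely when $\Qg(\K') = \Qg(\F_{\M}(s))$. Since $\M$ is $s$-generated and $\CA = \ISP(\M) = \ISP(\F_\M(s))$, it suffices to prove the single identity $\ISP(\E(\X)) = \ISP(\F_\M(s))$; the equivalence of (1) and (2) is then immediate because $\Sigma \imp \f \eq \p$ being admissible in $\CA$ is, by~\eqref{Eq:AdmFree}, the same as $\Sigma \mdl{\F_\M(s)} \f \eq \p$, which by the quasivariety-generation criterion is the same as $\Sigma \mdl{\E(\X)} \f \eq \p$.

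To establish $\ISP(\E(\X)) = \ISP(\F_\M(s))$ I would use the criterion that two finite sets of finite algebras generate the same quasivariety iff each is contained in the variety generated by the other and vice versa for the quasivariety — more concretely, I will show $\E(\X) \in \ISP(\F_\M(s))$ and $\F_\M(s) \in \ISP(\E(\X))$, each direction via the strong duality. For the first direction: the surjective morphism $\gamma \colon \twiddle{\M}^s \to \X$ dualises, since a strong duality converts surjections to embeddings and $\twiddle{\M}^s = \D(\F_\M(s))$, to an embedding $\E(\gamma) \colon \E(\X) \to \E\D(\F_\M(s)) \cong \F_\M(s)$; hence $\E(\X) \in \ISP(\F_\M(s))$, so $\ISP(\E(\X)) \subseteq \ISP(\F_\M(s)) = \CA$. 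For the converse direction: the embedding $\eta \colon \D(\M) \to \X$ dualises to a surjection $\E(\eta) \colon \E(\X) \to \E\D(\M) \cong \M$, so $\M$ — hence $\F_\M(s)$, and hence all of $\CA$ — lies in $\HSP(\E(\X))$. That alone is not enough, since we need $\F_\M(s) \in \ISP(\E(\X))$, not merely in $\HSP(\E(\X))$. Here I invoke that $\CA = \ISP(\M)$ is itself a quasivariety generated by the single algebra $\M$; combined with $\E(\X) \in \CA$ and $\M \in \HSP(\E(\X))$, the standard fact that $\ISP$ of a finite algebra $\M$ coincides with $\ISPP_U$ and, for finitely many finite algebras, with $\ISP$ (as recalled in Section~\ref{sec:admiss}) gives $\ISP(\M) \subseteq \ISP(\E(\X))$ once we know $\M \in \ISP(\E(\X))$. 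To get the latter I use that $\E(\X)$ separates enough points: more cleanly, since $\E(\eta)$ is a surjective homomorphism $\E(\X) \twoheadrightarrow \M$ and $\E(\X) \in \CA = \ISP(\M)$, we have $\M \in \HS(\E(\X)) \subseteq \HS\ISP(\M)$, and because $\M$ generates $\CA$ as a quasivariety every member of $\CA$ — in particular $\F_\M(s)$ — embeds in a power of $\M$, which embeds (using $\M \in \ISP(\E(\X))$, established as the homomorphic image $\E(\eta)$ of a subalgebra of a power, combined with congruence-distributivity/Jónsson-type reasoning in the lattice-based setting) in a power of $\E(\X)$.

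The cleanest route avoids that last delicate point entirely: apply $\E$ to the whole TS-configuration and read off, via Proposition~\ref{lem:Dist}, that $\F_\M(s) \cong \E(\twiddle{\M}^s) \in \ISP(\E(\X))$. Indeed $\gamma \colon \twiddle{\M}^s \to \X$ is a single morphism with $\D(\F_\M(s)) = \twiddle{\M}^s$; interpreting it the other way, $\gamma$ exhibits (a quotient of) $\twiddle{\M}^s$, and the embedding $\E(\gamma)\colon \E(\X) \hookrightarrow \F_\M(s)$ together with surjectivity of $\E(\eta)\colon \E(\X) \twoheadrightarrow \M$ places us exactly in the situation "$\M$ is a homomorphic image of a subalgebra of $\F_\M(s)$" of the dual $\proc{SubPreHom}$ setup; then $\ISP(\E(\X)) \supseteq \ISP(\M) = \CA \supseteq \ISP(\F_\M(s))$ by the generation property, giving equality. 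The main obstacle I anticipate is precisely this step — ensuring one genuinely lands in $\ISP$ and not merely $\HSP$ when pulling $\M$ out of $\E(\X)$; this is handled by exploiting that $\E(\X) \in \CA = \ISP(\M)$ is already in the quasivariety, so that "$\M \in \HS(\E(\X))$ and $\E(\X) \in \ISP(\M)$" forces $\ISP(\E(\X)) = \ISP(\M)$, using that $\M$ is the sole generator and the quasivariety it generates is closed under the relevant operators. Once $\ISP(\E(\X)) = \ISP(\F_\M(s))$ is in hand, the equivalence (1)$\Leftrightarrow$(2) follows by the one-line argument of the first paragraph.
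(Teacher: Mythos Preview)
Your first paragraph already contains a serious error: you assert that $\CA = \ISP(\M) = \ISP(\F_\M(s))$. This is generally false. We always have $\F_\M(s) \in \ISP(\M)$, hence $\ISP(\F_\M(s)) \subseteq \ISP(\M)$, but the reverse inclusion is typically strict---indeed, the gap between these two quasivarieties is precisely what measures the failure of structural completeness and is the entire point of the paper. For De~Morgan algebras, for instance, $\ISP(\F_{\class{DM}}(2))$ is a proper subquasivariety of $\class{DM}$. So the claim that establishing $\ISP(\E(\X)) = \ISP(\F_\M(s))$ would also give $\ISP(\E(\X)) = \ISP(\M)$, or that $\ISP(\E(\X)) \supseteq \ISP(\M)$, is wrong, and the string of equalities in your ``cleanest route'' paragraph collapses.

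More fundamentally, you correctly isolate the real obstacle---passing from $\M \in \ope{HS}(\E(\X))$ to $\F_\M(s) \in \ISP(\E(\X))$---but none of your three attempts closes it. J\'onsson-type arguments are irrelevant here; Proposition~\ref{lem:Dist} does not apply in the direction you need; and the assertion that ``$\M \in \ope{HS}(\E(\X))$ and $\E(\X) \in \ISP(\M)$ forces $\ISP(\E(\X)) = \ISP(\M)$'' is simply false (it only gives $\HSP(\E(\X)) = \HSP(\M)$). The paper's proof resolves this with a one-line idea you are missing: from $\M \in \ope{H}(\E(\X))$ and $\E(\X) \in \ISP(\F_\M(s))$ one gets $\HSP(\E(\X)) = \HSP(\M)$, and therefore the \emph{free algebras coincide}, $\F_\M(s) = \F_{\E(\X)}(s)$. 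Since the free algebra of a class always lies in the quasivariety that class generates, $\F_\M(s) = \F_{\E(\X)}(s) \in \ISP(\E(\X))$, which is exactly the missing inclusion. Once you have $\ISP(\E(\X)) = \ISP(\F_\M(s))$, the equivalence of (1) and (2) follows from~\eqref{Eq:AdmFree} as you say.
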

\begin{proof}   
Since $\gamma\colon \twiddle{\M}^s \to \X$ is surjective, 
it is an epimorphism in $\IScP(\twiddle{\M})$. 
It follows that $\E(\gamma)\colon \E(\X)\to \E(\twiddle{\M}^s)$ is a monomorphism in $\ISP(\M)$. 
Since $\ISP(\M)$ is a quasivariety, $\E(\gamma)$ is an embedding 
from $\E(\X)$ into $\E(\twiddle{\M}^s)$.  
As observed above, the dual space of $\F_\M(s)$ is
isomorphic to $\twiddle{\M}^s$, so $\E(\X)\in \ope{IS}(\F_\M(s))$.  
Hence $\ISP(\E(\X)) \subseteq\ISP(\F_\M(s))$. Since the duality yielded by $\twiddle{\M}$ is strong, 
the homomorphism $\E(\eta) \colon\E(\X)\to \E(\D(\M))$ is surjective. 
But also $\E(\D(\M)) \cong \M$, so
$\HSP(\M) \subseteq \HSP(\E(\X)) \subseteq \HSP(\ISP(\F_\M(s))) = \HSP(\F_\M(s)) = \HSP(\M)$. 
Hence $\F_\M(s) = \F_{\E(\X)}(s) \in \ISP(\E(\X))$, 
and we obtain $\ISP(\F_\M(s))\subseteq \ISP(\E(\X))$. 

The equivalence of (1) and (2) now follows from \eqref{Eq:AdmFree} in 
Section~\ref{sec:admiss}. 
\end{proof}

%%%%%%%%%%%%%%%%%%%%%%%%%%%%%%%%%%%%%%%%%%%%%%%%%%%%%

\section{The Test Spaces Method}\label{Sec:TestSpace}

In this section we use the results of Sections~\ref{Sec:MGS} and~\ref{Sec:SPH} 
to provide a procedure that produces for any finite algebra $\M$, a minimal set of algebras 
for checking admissibility in the quasivariety generated by $\M$. 
For reasons that will soon become apparent, we call this procedure the {\em Test Spaces Method}.  

The Test Spaces Method, which combines the dual formulations of \proc{SubPreHom} and $\proc{MinGenSet}$, is presented below. 
\medskip

\begin{center}
\begin{tabular}{ll}
\hline
\multicolumn{2}{c}{Test Spaces Method}\\
\hline \\
0.&\texttt{Find $\twiddle{\M}$ that yields a strong duality for $\CA=\ISP(\M)$.}\\[.025in]
1. & \texttt{Compute $\D(\M)$.}\\[.025in]
2. &\texttt{Find a TS-configuration $(\X,\gamma,\eta)$ with~$X$ of minimum size.}  \\[.025in]
3. &\texttt{Determine the set $\mathcal{M}$ of maximal join-irreducible elements of $\mathcal{S}_{\X}$}.\\[.025in]
4. & \texttt{Construct a set $\mathcal{V}$ by repeatedly removing from $\mathcal{M}$ any structure} \\
& \texttt{that is a morphic image of some other structure in the set.}\\[.025in]
5.& \texttt{Compute  $\K=\{\,E(\X)\mid\X\in \mathcal{V}\,\}$.}\\[.1in]
\end{tabular}
\end{center}
Step~0  is so labelled because in many cases a suitable duality can be found in the literature; indeed, 
we assume that we already have such a duality to hand. 
Steps~1--4  then form the core of the method, corresponding to the
dualised versions of  the algorithms $\proc{MinGenSet}$ and $\proc{SubPreHom}$.

For Step~1, we compute the dual space $\D(\M)$.  
This is $\End (\M)$, the set of endomorphisms of~$\M$, 
with the operations (and partial operations if any) and relations defined pointwise. 
In particular, the action on $\End(\M)$ of any unary operation $g$ in~$G$ is by composition;  
here we think of $g$ as having codomain $\MT$. 
To simplify notation, we write an $n$-tuple  $(x_1,\ldots, x_n)$ as $x_1\cdots x_n$. 

For Step~2, we calculate a minimal TS-configuration $(\X,\gamma,\eta)$ using 
the requirements that $\X$ must contain a copy of $\D(\M)$ and be a morphic image of 
$\MT^s$, the dual space of $\F_{\CA}(s)$.
It is not strictly necessary here to obtain the smallest $\X$; in particular, we could always use $\X=\MT^s$. 
However, any reduction in the size of $\X$ will greatly simplify the process of 
calculating $\mathcal{S}_{\X}$. 
In all the case studies presented in the next section, $\X$ is of 
minimal size and indeed the only maximal join-irreducible of $\mathcal{S}_{\X}$ 
happens to be $\X$ itself. In other cases it might be more practical to choose a TS-configuration $\X$ that is not necessarily of minimal size but is sufficiently small for $\mathcal{S}_{\X}$ to be calculated.
 
Step~3 requires us to determine the set $\mathcal{M}$ of join-irreducible elements of $\mathcal{S}_\X$. 
 In all the examples considered in this paper,  $\X$ itself has a unique lower cover 
 in the lattice $\mathcal{S}_{\X}$; 
this is because the particular form taken by the alter ego $\MT$ constrains 
the possible morphisms from $\X$ to $\X$. In this situation, $\mathcal{M}$ 
is just $\{\X\}$ and Step~4 can be skipped; we shall do this henceforth in our case 
studies without explicit comment. Step~5 then becomes the calculation of $\E(\X)$, 
now with the assurance our theory provides that this is indeed the minimal generator 
for $\ISP(\F_{\M}(s))$.

\begin{DMex}  \label{DMex2}  
Recall from part 1 of this running example that the algebra $\alg{D_4}$ generating 
the (quasi)variety $\class{DM}$ of De Morgan algebras is $2$-generated. 
We therefore apply the Test Spaces Method with $s=2$, using the natural duality 
for $\class{DM}$ described in part 2.

\begin{enumerate} 

\item[1.] \texttt{Compute $\D(\alg{D_4})$.} 

The universe of  $\D(\alg{D_4})$ is the set $\{e_1,e_2\}$ of endomorphisms of $\alg{D_4}$, 
where $e_1$ is the identity map and $e_2$ exchanges~$a$ and~$b$. 
Observe that $e_1(b)=b\preccurlyeq a= e_2(b)$ and $e_2(a)=b\preccurlyeq a=e_1(a)$. 
Hence $e_1$ and $e_2$ are incomparable in $(\D(\alg{D_4}); \preccurlyeq)$. 
The unary operation~$g$ acts on $e_1$ and~$e_2$ by composition, 
giving $g(e_1)=e_2$ and $g(e_2)=e_1$.

\item[2.]  \texttt{Find a  TS-configuration $(\X,\gamma,\eta)$ with~$X$ of minimum size.}  

In order for $\D(\alg{D_4})$ to embed into $\X=(X; g, \preccurlyeq)$, the latter must contain 
incomparable elements  $u,v$ satisfying  $g(u)=v$ and $g(v)=u$. 
Since the poset $\twiddle{\alg{D_4}}^2$ has top and bottom elements, 
$X$ must also have top and  bottom elements $\top$ and $\bot$, respectively. 
Moreover, there exist elements of  $\twiddle{\alg{D_4}}^2$ fixed by $g$, 
and hence some element of $\X$ is fixed by~$g$; this element cannot be $u$, $v$, $\top$, or $\bot$. 
So~$|X| \geq 5$. A natural candidate for the universe of~$\X$ is then the subset 
$X=\{ab,ba,aa,bb,00\}$ of $\twiddle{\alg{D_4}}^2$ in Fig.~\ref{fig:XforDM}, 
taking $u=ab$, $v=ba$, $\top=aa$, $\bot=bb$, $w=00$.

\begin{figure}[ht]
\begin{center}
	\begin{tikzpicture}[scale=.8]  

	%%%%% D42
		\node [label=above:{$\top$}](top) at (0,3) {};   
		\node [label=right:{$v$}](v) at (0,1.5) {}; 
		\node [label=below:{$\bot$}](bot) at (0,0) {}; 
		\node [label=left:{$u$}](u) at (-1.5,1.5) {}; 
		\node  (w) at (1.5,1.5) {}; 
		\node [xshift=16pt] at (w) {$w$};
		\fill (top) circle (2pt);
		\fill  (bot) circle (2pt);
		\fill  (u)  circle (2pt);
		\fill  (v) circle (2pt);
		\fill  (w) circle (2pt);
		
		\draw [<->]  (top) .. controls +(-3,.5) and +(-3,-.5) .. (bot);
		\draw [<->]  (u) -- (v);
		\draw [<-]  (w) .. controls +(.5,.4) and +(.5,-.4) .. (w);

		\draw [shorten <=-2pt, shorten >=-2pt] (u) -- (top);
		\draw [shorten <=-2pt, shorten >=-2pt] (v) -- (top);
		\draw [shorten <=-2pt, shorten >=-2pt] (w) -- (top);
		\draw [shorten <=-2pt, shorten >=-2pt] (bot) -- (u);
		\draw [shorten <=-2pt, shorten >=-2pt] (bot) -- (v);
		\draw [shorten <=-2pt, shorten >=-2pt] (bot) -- (w);

	\end{tikzpicture}
\caption{Test space $\X$ for De Morgan algebras  \label{fig:XforDM}}
\end{center}
\end{figure}
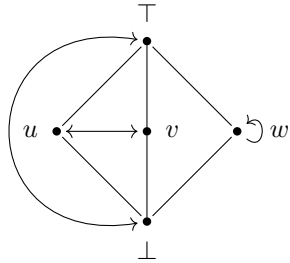

It is easy to see that $X$ is closed under the action of  $g$ and that the 
substructure~$\X$ of $\twiddle{\alg{D_4}}^2$ determined by~$X$ 
satisfies the required conditions. 
We define $\eta\colon\D(\alg{D_4}) \to \X$ by $\eta(e_1)=u$ and $\eta(e_2)=v$. 
Then $\eta$ is an order isomorphism and preserves the $g$-action. 
Fig.~\ref{fig:TSforDM} shows that there is an obvious partition of $\twiddle{\alg{D_4}}^2$ 
compatible with the structure such that each set of the partition contains precisely  
one point of~$X$. The associated quotient map is therefore a morphism from 
$\twiddle{\alg{D_4}}^2$ onto~$\X$, as shown in Fig.~\ref{fig:XforDM}.

\begin{figure}[ht]
\begin{tikzpicture}[scale=1]  
	\node [label=above:{$aa$}] (aa) at (7,4) {};
     \node (a0) at (4,3) {};
	\node (a1) at (6,3)  {};
     \node (0a) at  (8,3) {};
	\node (1a) at (10,3) {};    
	\node (0b) at (4,1) {};
	\node (b0) at (10,1) {};
	\node (b1) at (8,1) {};
	\node (1b) at (6,1) {};
	\node [label=below:{$bb$}] (bb) at (7,0) {};	
	\node [label=right:{$ba$}] (ba) at (12,2) {};   %%%6
	\node [label=left:{$ab$}] (ab) at (2,2) {};    %%%%4
	\node [label=left:{$00$}] (00) at (4,2) {};
	\node [label=right:{$01$}] (01) at (8,2)   {};	
     \node [label=right:{$10$}] (10) at (10,2) {};    
	\node [label=left:{$11$}] (11) at (6,2)  {};  
	
	\node [xshift=-13pt, yshift=3pt] at (a0) {$a0$};
	\node [xshift=-13pt, yshift=3pt] at (a1)  {$a1$};
     \node [xshift=13pt, yshift=3pt] at (0a) {$0a$};
	\node [xshift=13pt, yshift=3pt] at (1a)  {$1a$};    
	\node [xshift=-13pt, yshift=-3pt] at (0b) {$0b$};
	\node [xshift=13pt, yshift=-3pt] at (b0)  {$b0$};
     \node [xshift=13pt, yshift=-3pt] at (b1) {$b1$};
	\node [xshift=-13pt, yshift=-3pt] at (1b)  {$1b$};    

	\fill (aa) circle [radius=2pt];
     \draw (a0) circle [radius=2pt];
	\draw (a1) circle [radius=2pt];
     \draw (0a) circle [radius=2pt];
	\draw (1a) circle [radius=2pt];
	\draw (0b) circle [radius=2pt];
	\draw (b0) circle [radius=2pt];
	\draw  (b1) circle [radius=2pt];
	\draw  (1b) circle [radius=2pt];
	\fill  (bb) circle [radius=2pt];

	\fill (ba) circle [radius=2pt];
	\fill (ab) circle [radius=2pt];
	\fill (00) circle [radius=2pt];
	\draw (01) circle [radius=2pt];
     \draw (10) circle [radius=2pt];
	\draw (11) circle [radius=2pt];

	\draw [shorten <=-2pt, shorten >=-2pt] (aa) -- (0a);
	\draw [shorten <=-1pt, shorten >=-1pt] (aa) -- (a0);
	\draw [shorten <=-2pt, shorten >=-2pt] (aa) -- (a1);
	\draw [shorten <=-1pt, shorten >=-1pt] (aa) -- (1a);
	\draw [shorten <=-1pt, shorten >=-1pt] (0a) -- (00);
	\draw [shorten <=-1pt, shorten >=-1pt] (0a) -- (01);
	\draw [shorten <=-1pt, shorten >=-1pt] (0a) -- (ba);
	\draw [shorten <=-1pt, shorten >=-1pt] (a0) -- (00);
	\draw [shorten <=-1pt, shorten >=-1pt] (a0) -- (ab);
	\draw [shorten <=-1pt, shorten >=-1pt] (a0) -- (10);
	\draw [shorten <=-1pt, shorten >=-1pt] (a1) -- (01);
	\draw [shorten <=-1pt, shorten >=-1pt] (a1) -- (ab);
   	\draw [shorten <=-1pt, shorten >=-1pt] (a1) -- (11);
	\draw [shorten <=-1pt, shorten >=-1pt] (1a) -- (ba);
	\draw [shorten <=-1pt, shorten >=-1pt] (1a) -- (10);
	\draw [shorten <=-1pt, shorten >=-1pt] (1a) -- (11);
  	\draw [shorten <=-1pt, shorten >=-1pt] (0b) -- (00);
	\draw [shorten <=-1pt, shorten >=-1pt] (0b) -- (01);
	\draw [shorten <=-1pt, shorten >=-1pt] (0b) -- (ab);
	\draw [shorten <=-1pt, shorten >=-1pt] (b0) -- (00);
	\draw [shorten <=-1pt, shorten >=-1pt] (b0) -- (ba);
	\draw [shorten <=-1pt, shorten >=-1pt] (b0) -- (10);
	\draw [shorten <=-1pt, shorten >=-1pt] (b1) -- (01);
	\draw [shorten <=-1pt, shorten >=-1pt] (b1) -- (ba);
 	\draw [shorten <=-1pt, shorten >=-1pt] (b1) -- (11);
	\draw [shorten <=-1pt, shorten >=-1pt] (1b) -- (ab);
	\draw [shorten <=-1pt, shorten >=-1pt] (1b) -- (10);
	\draw [shorten <=-1pt, shorten >=-1pt] (1b) -- (11);
   	\draw [shorten <=-1pt, shorten >=-1pt] (bb) -- (0b);
	\draw [shorten <=-1pt, shorten >=-1pt] (bb) -- (b0);
	\draw [shorten <=-2pt, shorten >=-2pt] (bb) -- (b1);
	\draw [shorten <=-2pt, shorten >=-2pt] (bb) -- (1b);

%%%% Boxes
	\draw[gray] (3.2,2.7) -- (3.2,4.6) -- (10.8,4.6) -- (10.8, 2.7) -- (3.2,2.7) ; %aa class
	\draw[gray] (3.2,-0.7) -- (3.2,1.3) -- (10.8,1.3) -- (10.8, -0.7) -- (3.2,-0.7) ; % bb class
	\draw[gray] (3.2,1.8) -- (3.2,2.3) -- (10.8,2.3) -- (10.8, 1.8) -- (3.2,1.8) ; % 00 class
	\draw[gray] (1.2,1.8) -- (1.2,2.3) -- (2.5,2.3) -- (2.5, 1.8) -- (1.2,1.8) ; %ab
	\draw[gray] (11.5,1.8) -- (11.5,2.3) -- (12.8,2.3) -- (12.8, 1.8) -- (11.5,1.8) ; %ba

	\end{tikzpicture}
\caption{Setting up  a TS-configuration  for De Morgan algebras}\label{fig:TSforDM}
\end{figure}
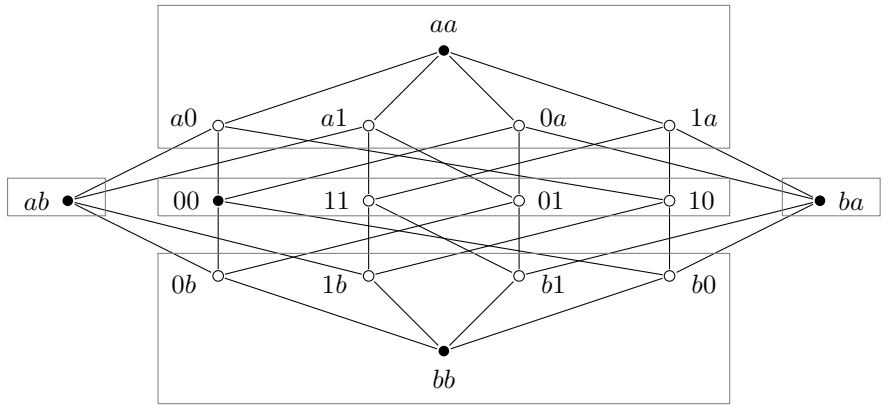

\item[3.] \texttt{Determine the set $\mathcal{M}$ of maximal join-irreducible elements of $\mathcal{S}_{\X}$.}

We find that  the subset of $\mathcal{S}_{\X}$ of universes of images of morphisms from $\X$ to itself is $\{\{w\},\{\bot,w,\top\},X\}$  (note that any such image must contain a point fixed by~$g$).  It follows that $\mathcal{S}_{\X}$ is precisely $\{\{w\},\{\bot,w,\top\},X\}$, so $\X$ is itself $\X$-join-irreducible.

\item[5.] \texttt{Compute  $\E(\X)$.}

We find either directly or, more quickly, using the restricted Priestley duality,
 that $\E(\X)$ is $\overline{\alg{D_{42}}}$, the algebra produced by {\tafa} 
 (see part 1 of this running example).

\end{enumerate}
\end{DMex}

%%%%%%%%%%%%%%%%%%%%%%%%%%%%%%%%%%%%%%%%%%%%%

\section{Case studies}\label{Sec:CaseStudies}

In this section we present applications of the Test Spaces Method. Our aim is to focus on the outcomes, highlighting the computational challenges of using either free algebras directly or the algebraic approach of~\cite{MR13} to check admissibility. We therefore select examples for which the sizes of the free algebras increases very rapidly as the size of the generating algebra increases. Indeed, all of the case studies  presented in this section involve free algebras that are too large to be handled by {\tafa}.

The (quasi)varieties considered here are all (at least term-equivalent to) finitely generated (quasi)varieties of bounded distributive lattices equipped with one or two unary operations, each of which is an endomorphism or a dual endomorphism. Our examples are accordingly of similar algebraic type to those for which admissibility can be investigated with the aid of {\tafa}, including De Morgan algebras, Kleene algebras, and Stone algebras. This similarity allows us to focus on revealing the challenges inherent in working with larger generating algebras, without different factors coming into play. We have chosen in particular to study the variety of MS-algebras. This class of algebras was  introduced  by Blyth and Varlet as a common generalisation of De Morgan and Stone algebras; see~\cite{BV}.  This variety and the larger variety of Ockham algebras have been extensively studied in their own right.  More importantly, duality tools applied first to these special varieties have pointed the way to major advances in the wider theory of natural dualities, on some of which we tacitly rely.   Moreover, amenable strong dualities are available in the literature for $\text{MS}$-algebras and its subvarieties, and for the related examples we consider, \textit{viz.}  double Stone algebras, Kleene--Stone algebras, and involutive Stone algebras.  

We treat the dualities in black-box fashion. All other steps in the Test Space Method can then in principle be carried out automatically (that is, implemented as a terminating computer program); however, since we carry out these steps by hand, we often take advantage of the theory of natural dualities to simplify calculations.

Table~\ref{table:casestud} summarises the results obtained in this section. 
In all these examples, the algebra $\M$ is $2$-generated, so it suffices to consider free algebras 
on two generators. 

\begin{table}[ht]
\begin{center}
\begin{tabular}{lrrrr}
  	$\ISP(\M)$ & $|M|$  & $ |\F_\M(2)|$\quad  & $|X|$ &  $|\E(\X)|$   \\
	\hline &&&&\\[-.3cm] 
	De Morgan algebras & $4$ \ & $168$ \ & $5$ \ & $10$ \ \\[.1cm]   
   MS-algebras & $6$ \ & $8\thinspace790$ \  & $6$ \ &  $14$ \ \\[.1cm]
  subvarieties of  MS-algebras: \\[.1cm] 
	\quad     $\class{K}_2$   & $4$ \ & $414$ \ &$4$ \ & $7$ \ \\[.1cm]  
	\quad     $\class{K}_3$   & $5$ \ & $3\thinspace059$ \ &   $4$ \ &  $9$ \ \\[.1cm]   
   double Stone algebras  & $4$ \ & $7\thinspace776$ \ & $4$ \ & $8$ \ \\[.1cm]  
 involutive Stone algebras & $6$ \ & \ $3\thinspace483\thinspace648$ \  & $6$ \ & $20$ \  \\[.1cm]
  	Kleene--Stone algebras & $5$ \ & $1\thinspace741\thinspace824$ \  & $4$ \ & $12$ \ \\[.1cm]
\end{tabular}
\end{center}
\caption{Case studies}\label{table:casestud}
\end{table}

% %%%%%%%%%%%%%%%%%%%%%%%%

\begin{MSex}
%$\text{MS}$-algebras were introduced and extensively studied by Blyth and Varlet
%as a common generalisation of De Morgan and Stone algebras; see~\cite{BV}. 
Fig.~\ref{fig:MS} depicts both the $2$-generated algebra 
\[
\alg{MS} =( \{0, a, b, c, d, 1\}; \wedge, \vee, f, 0, 1)
\]
generating the (quasi)variety $\class{MS} = \ISP(\alg{MS})$ of MS-algebras and an alter ego yielding a 
strong  duality $\twiddle{\alg{MS}}= (\{ 0,a,b,c,d,1\}; g,\preccurlyeq)$ (see~\cite{UnOpe}).
\begin{figure} [ht]
\begin{center}
	\begin{tikzpicture}[scale=1]  
	%	MS
	  	\node [label=above:$1$] (1) at (-3,2) {};
		\node [label=above:$d$] (d) at (-4,1)  {};
  		\node [label=right:$c$] (c) at (-2,1) {};
		\node [label=below:$b$] (b) at (-3,0)  {};
  		\node [label=below:$a$] (a) at (-1,0) {};
		\node [label=below:$0$] (0) at (-2,-1)  {};

		\draw [thick,shorten <=-1pt, shorten >=-1pt] (1) -- (c);
		\draw [thick,shorten <=-1pt, shorten >=-1pt] (1) -- (d);
		\draw [thick,shorten <=-1pt, shorten >=-1pt] (b) -- (c);
		\draw [thick,shorten <=-1pt, shorten >=-1pt] (b) -- (d);
		\draw [thick,shorten <=-1pt, shorten >=-1pt] (c) -- (a);
		\draw [thick,shorten <=-1pt, shorten >=-1pt] (b) -- (0);
		\draw [thick,shorten <=-1pt, shorten >=-1pt] (0) -- (a);

		\draw [thin,<->]  (1) .. controls +(-3,-0.2) and +(-2,.2) .. (0);
		\draw [thin,->]  (b) .. controls +(-.5,.1) and +(0,-.5) .. (d);
		\draw [thin,->]  (a) .. controls +(.5,-.4) and +(.5,.4) .. (a);
		\draw [thin,->]  (d) .. controls +(-.5,-.4) and +(-.5,.4) .. (d);
		\draw [thin, ->]  (c) -- (0);

		\draw (1) circle [radius=2pt];
		\draw (c) circle [radius=2pt];
		\draw (d) circle [radius=2pt];
		\draw (b) circle [radius=2pt];
		\draw (a) circle [radius=2pt];
		\draw (0) circle [radius=2pt];
		
		%%alter ego MS

 	  	\node [label=above:$d$] (a1) at (3,2) {};
		\node [label=above:$1$] (ad) at (2,1)  {};
  		\node [label=right:$b$] (ac) at (4,1) {};
		\node [label=below:$c$] (ab) at (3,0)  {};
  		\node [label=below:$0$] (aa) at (5,0) {};
		\node [label=below:$a$] (a0) at (4,-1)  {};

		\draw [thick,shorten <=-1pt, shorten >=-1pt] (a1) -- (ac);
		\draw [thick, shorten <=-1pt, shorten >=-1pt] (a1) -- (ad);
		\draw [thick,shorten <=-1pt, shorten >=-1pt] (ab) -- (ac);
		\draw [thick,shorten <=-1pt, shorten >=-1pt] (ab) -- (ad);
		\draw [thick,shorten <=-1pt, shorten >=-1pt] (ac) -- (aa);
		\draw [thick,shorten <=-1pt, shorten >=-1pt] (ab) -- (a0);
		\draw [thick,shorten <=-1pt, shorten >=-1pt] (a0) -- (aa);

		\draw [thin,<->]  (a1) .. controls +(-3,-0.2) and +(-2,.2) .. (a0);
		\draw [thin,->]  (ab) .. controls +(-.5,.1) and +(0,-.5) .. (ad);
		\draw [thin,->]  (aa) .. controls +(.5,-.4) and +(.5,.4) .. (aa);
		\draw [thin,->]  (ad) .. controls +(-.5,-.4) and +(-.5,.4) .. (ad);
		\draw [thin,->]  (ac) -- (a0);

		\draw[thin] (a1) circle [radius=2pt];
		\draw[thin]  (ac) circle [radius=2pt];
		\draw[thin] (ad) circle [radius=2pt];
		\draw[thin] (ab) circle [radius=2pt];
		\draw[thin]  (aa) circle [radius=2pt];
		\draw[thin] (a0) circle [radius=2pt];

	\end{tikzpicture}
\end{center}
\caption{The generating algebra $\alg{MS}$ and its alter ego}\label{fig:MS}
\end{figure}
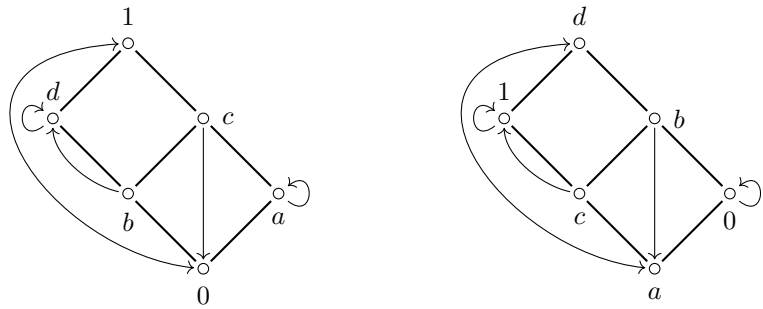

Since the strong duality given by $\twiddle{\alg{MS}}$ 
coincides with the restricted Priestley duality, the lattice reduct of the free algebra 
$\F_{\alg{MS}}(2)$ is isomorphic to the lattice of up-sets of 
$\boldsymbol 2^2 \times \boldsymbol 3^2$, which has $8\thinspace 790$ 
elements and is much too large to be handled by {\tafa}.

The Test Spaces Method for this quasivariety proceeds as follows:

\begin{enumerate}

\item[1.]  \texttt{Compute $\D(\alg{MS})$.}

Let us denote an endomorphism $e$ of $\alg{MS}$ by the sextuple 
$\big(e(0), e(a), e(b), e(c), e(d), e(1)\big)$. 
There are three such endomorphisms: the identity map,  $\id$, and the maps~$e_1$ and 
$e_2$, given respectively by the sextuples $(0,d, a, 1, a, 1)$ and $(0, a,d,1,d,1)$. 
Equipping  $\{ \id, e_1,e_2\}$ with  the pointwise order from
 $(\twiddle{\alg{MS}};\preccurlyeq)$, we see that $\id \prec e_2$ and 
 $e_1$ is incomparable with both $\id$ and $e_2$.

The space $\D(\alg{MS})$ is therefore as depicted in Fig.~\ref{fig:DualMS}.
\begin{figure} [ht]
\begin{center}
	\begin{tikzpicture}[scale=.55]  
	%	MS
	  	\node [label=above: {$e_2$}] 
 (e2) at (0,1) {};
		\node [label=above:{$e_1$}] 
(e1) at (3,1)  {};
  		\node [label=below: {$\id$}]
 (Id) at (0,0) {};
			
		\draw [thick, shorten <=-1pt, shorten >=-1pt] (Id) -- (e2);

		\draw [thin, <->]  (e2) -- (e1);
		\draw [thin, ->]  (Id) .. controls +(1,.1) and +(-.5,-.5) .. (e1);

		\draw (e1) circle [radius=2pt];
		\draw (e2) circle [radius=2pt];
		\draw (Id) circle [radius=2pt];

	\end{tikzpicture}
\end{center}
\caption{ $\D(\alg{MS}) $ }\label{fig:DualMS}
\end{figure}

\item[2.]  \texttt{Determine a TS-configuration $(\X,\gamma,\eta )$ with~$X$ of minimum size.}  

\noindent Let $\X$ be the substructure of $\twiddle{\alg{MS}}^2$   with universe $X=\{ 00, aa, ad, da, ba, dd\}$ (depicted in Fig.~\ref{fig:MS2}). Then $\X$ is a retract of $\twiddle{\alg{MS}}^2$ given by the morphism
\[
\eta(x)=\begin{cases}
x & \mbox{if }x\in    \{ ad, da, ba\},\\ 
dd & \mbox{if } x\in 
            \{b,d\}^2 \cup 
            (\{b,d\} \times\{ 0,c,1\})  \cup  
                                 (\{0,1,c\} \times \{b,d\} ),  
                                 
\\
aa & \mbox{if } x\in 
                           (\{a\} \times \{0,c,1\}) \cup 
                           (\{ 0,c,1\} \times \{a\})  \cup \{ ab\}  ,\\
00 & \mbox{if }  x\in \{  0,c,1\}^2,
\end{cases}
\]
and the substructure of $\X$ determined by $\{ ad, da, ba\}$  is isomorphic to $\D(\alg{MS})$.

\begin{figure} [ht]
\begin{center}
	\begin{tikzpicture}[scale=.75]  
	%	MS
	  	\node [label=above:{$dd$}] (dd) at (1,2) {};
	  	\node [label=left:{$da$}] (da) at (1,1) {};
		\node [label=above:{$ad$}] (ad) at (3,1)  {};
  		\node [label=left:{$ba$}] (ba) at (1,0) {};
 		\node [label=below:{$00$}] (00) at (-1,1) {};
	  	\node [label=below:{$aa$}] (aa) at (1,-1) {};
			
		\draw [shorten <=-1pt, shorten >=-1pt] (ba) -- (da);
		\draw [shorten <=-1pt, shorten >=-1pt] (dd) -- (da);
		\draw [shorten <=-1pt, shorten >=-1pt] (dd) -- (ad);
		\draw [shorten <=-1pt, shorten >=-1pt] (dd) -- (00);
		\draw [shorten <=-1pt, shorten >=-1pt] (aa) -- (00);
		\draw [shorten <=-1pt, shorten >=-1pt] (ba) -- (aa);
		\draw [shorten <=-1pt, shorten >=-1pt] (ad) -- (aa);

		\draw [<->,dashed]  (da) -- (ad);
		\draw [->,dashed]  (ba) -- (ad);
		\draw [->,dashed]  (00) .. controls +(-.5,-.4) and +(-.5,.4) .. (00);
		\draw [<->,dashed]  (dd) .. controls +(-4,0) and +(-4,0) .. (aa);

		\draw (da) circle [radius=2pt];
		\draw (ad) circle [radius=2pt];
		\draw (ba) circle [radius=2pt];
		\draw (00) circle [radius=2pt];
		\draw (dd) circle [radius=2pt];
		\draw (aa) circle [radius=2pt];

	\end{tikzpicture}
\end{center}
\caption{Minimal $\X$  for $\text{MS}$-algebras }\label{fig:MS2}
\end{figure}

\item[3.] \texttt{Determine the set $\mathcal{M}$ of maximal join-irreducible elements of $\mathcal{S}_{\X}$.}

The $\X$-substructures of $\X$ are determined by the sets $\{00\}$, $\{\bot,00,\top\}$, $\{\bot,00,da,ad,\top\}$, and~$X$. Hence $\mathcal{M}= \{\X\}$.  
\item[5.] \texttt{Compute $\E(\X)$.}

Since we are dealing here with a natural duality 
which is also a restricted Priestley duality,  
the lattice reduct of $\E(\X)$ is isomorphic to the lattice of up-sets of $(X;\preccurlyeq)$, 
and hence to the product of a $3$-element chain and two $2$-element chains with extra top and 
bottom (see Fig.~\ref{fig:EXMS}). 

\begin{figure} [ht]
\begin{center}
	\begin{tikzpicture}[scale=.7]  
	%	MS
%%%%%%% Db42

		\node (1) at (5,0) {}; 
		\node (2) at (5,3) { };   
		\node (3) at (5,2.25) {}; 
		\node (4) at (5,1.5) {}; 
		\node (5) at (5,.75) {}; 
		\node (6) at (4,2.25) {};  
		\node (7) at (4,1.5) {}; 
		\node (8) at (6,2.25) {}; 
		\node (9) at (6,1.5) {}; 
		\node (10) at (3,2.25) {}; 
		\node (11) at (3,3) {}; 
		\node (12) at (4,3) {}; 
		\node (13) at (4,3.75) {}; 
		\node (14) at (4,4.5) {}; 

		\draw (1) circle (2pt);
		\draw (2) circle (2pt);
		\draw  (3) circle (2pt);
		\draw  (4)  circle (2pt);
		\draw  (5) circle (2pt);
		\draw  (6) circle (2pt);
		\draw  (7) circle (2pt);
		\draw  (8) circle (2pt);
		\draw  (9) circle (2pt);
		\draw  (10) circle (2pt);
		\draw  (11) circle (2pt);
		\draw  (12) circle (2pt);
		\draw  (13) circle (2pt);
		\draw  (14) circle (2pt);

%order
   		\draw [shorten <=-1pt, shorten >=-1pt] (1) -- (5);
		\draw [shorten <=-2pt, shorten >=-2pt] (5) -- (7);
		\draw [shorten <=-1pt, shorten >=-1pt] (5) -- (4);
		\draw [shorten <=-2pt, shorten >=-2pt] (5) -- (9);
		\draw [shorten <=-1pt, shorten >=-1pt] (7) -- (6);
		\draw [shorten <=-1pt, shorten >=-1pt] (9) -- (8);
		\draw [shorten <=-2pt, shorten >=-2pt] (2) -- (8);
		\draw [shorten <=-1pt, shorten >=-1pt] (2) -- (3);
		\draw [shorten <=-2pt, shorten >=-2pt] (2) -- (6);
		\draw [shorten <=-2pt, shorten >=-2pt] (2) -- (13);
		\draw [shorten <=-2pt, shorten >=-2pt] (7) -- (3);
		\draw [shorten <=-2pt, shorten >=-2pt] (9) -- (3);
		\draw [shorten <=-2pt, shorten >=-2pt] (6) -- (4);
		\draw [shorten <=-2pt, shorten >=-2pt] (8) -- (4);
		\draw [shorten <=-2pt, shorten >=-2pt] (3) -- (12);
		\draw [shorten <=-2pt, shorten >=-2pt] (11) -- (6);
		\draw [shorten <=-2pt, shorten >=-2pt] (7) -- (10);
		\draw [shorten <=-2pt, shorten >=-2pt] (12) -- (10);
		\draw [shorten <=-2pt, shorten >=-2pt] (13) -- (11);
		\draw [shorten <=-1pt, shorten >=-1pt] (12) -- (13);
		\draw [shorten <=-1pt, shorten >=-1pt] (10) -- (11);
		\draw [shorten <=-1pt, shorten >=-1pt] (14) -- (13);

%%arrows

\draw [<->,dashed]  (1) .. controls +(3,1) and +(3,-1) .. (14);
\draw [->,dashed]  (2) .. controls +(.5,-.9) and +(.5,.9) .. (5);
\draw [<->,dashed]  (13) .. controls +(-2.5,.5) and +(-2.5,-.5) .. (5);
\draw [->,dashed]  (3) -- (4);
\draw [->,dashed]  (6) -- (10);
\draw [->,dashed]  (7) -- (11);
\draw [<->,dashed]  (13) -- (5);
\draw [<->,dashed]  (12) -- (4);
\draw [<->,dashed]  (10) .. controls +(-.5,0) and +(-.5,0) .. (11);
\draw [<->,dashed]  (8) .. controls +(.5,0) and +(.5,0) .. (9);	\end{tikzpicture}
\end{center}
\caption{ $\E(\X)$ for MS-algebras}\label{fig:EXMS}
\end{figure}

\end{enumerate}

\end{MSex}

%%%%%%%%%%%%%%%%%%%%%%

\begin{Kiex}  \label{Kiex}
The variety of MS-algebras provided a good choice for a case study because it contains as subvarieties various classes to which {\tafa} had previously been applied.    Moreover, 
the structure of the lattice $\Lambda(\class{MS})$ of all subvarieties of $\class{MS}$ 
is well understood; see for example~\cite[Fig.~1]{AP94}. 
Since the varieties $\class{S}$, $\class{K}$, and $\class{DM}$ lie low down in  
$\Lambda(\class{MS})$, it is unsurprising that they are tractable by hand or with {\tafa}.
 It is natural therefore to ask what happens for proper subvarieties higher up in  
 the subvariety lattice.  
 We focus here on two varieties $\class{K}_2 = \ISP(\alg{K_2})$ and 
 $\class{K}_3= \ISP(\alg{K_3})$, where $\alg{K_2}$ and $\alg{K_3}$
  are $2$-generated subalgebras of $\alg{MS}$ with 
\[
K_2  = \{0,a,c,1\} \quad \text{and} \quad 
K_3 = \{0,b,c,d,1\}.
\]
These varieties both contain $\class{S}$ and $\class{K}$, but neither contains $\class{DM}$. 
There exist strong dualities for both $\class{K}_2$ and $\class{K}_3$, which can be 
used to apply the Test Spaces Method.   
However, the simplest way for a knowledgeable duality theorist to proceed is to use instead 
the techniques of multisorted duality theory which originated in~\cite{genpig}
 (see also~\cite[Chapter~7]{CD98}). 
Hence we shall simply present our conclusions without proof,
formulated for the single-sorted theory from Section~\ref{sec:natdual}. 

A strongly dualising alter ego for $\alg{K_2}$  is
\[
\twiddle{\alg{K_2}} =  (\{0,a,c,1\};  f,  \preccurlyeq ,r),
\]
where $f$ is the endomorphism that fixes~$a$ and sends~$c$ to~$1$ and $\preccurlyeq$ 
is the partial order induced on $K_2^2$ by that of $\twiddle{\alg{MS}}$ and  
 \[
r = K_2^2 {\setminus} \{ 0c, 01, 10, c1\}. 
\]
Using the Test Spaces Method, we find a minimal algebra $\E(\X)$ whose underlying lattice is a $7$-element chain (see Fig.~\ref{fig:EXK2K3}). In this case, the $2$-generated $\class{K}_2$-free algebra has cardinality $414$ and is sufficiently small  for the algebraic approach to deliver the same solution. By contrast, computer calculations tell us that the $2$-generated $\class{K}_3$-free algebra has $3\,059$ elements and the algebraic approach is no longer feasible. We obtain, however, a strongly dualising alter ego for $\alg{K_3}$ 
\[
\twiddle{\alg{K_3}} =  (\{0,b,c,d,1\}; \{0,1\},\{0,d,1\}, h, \preccurlyeq , s),
\]
where $h$ is the only non-identity endomorphism of $\alg{K_3}$ which fixes~$d$, sends~$b$ to~$d$ and~$c$ to~$1$, the partial order $\preccurlyeq$ 
is the partial order induced on $K_3^2$ by that of $\twiddle{\alg{MS}}$, and  
\[
s = \{ 00, 0b, 0d, d0, db, dc, dd, d1, 1d, 11 \}.
\] 
The  Test Spaces Method then produces a minimal algebra $\E(\X)$ with $9$ elements (see Fig.~\ref{fig:EXK2K3}).  
%%%%%%%%%%%%%%%%
\begin{figure} [ht]
\begin{center}
\begin{tikzpicture}[scale=.8]  
%K2
\node (1) at (0,0) {};
\node (2) at (0,.75){};
\node (3) at (0,1.5) {};
\node (4) at (0,2.25) {};
\node (5) at (0,3) {};
\node (6) at (0,3.75) {};
\node (7) at (0,4.5) {};

		\draw (1) circle (2pt);
		\draw (2) circle (2pt);
		\draw  (3) circle (2pt);
		\draw  (4)  circle (2pt);
	      \draw  (5) circle (2pt);
		\draw  (6) circle (2pt);
		\draw  (7) circle (2pt);

\draw [shorten <=-1pt, shorten >=-1pt] (1) -- (2);
\draw [shorten <=-1pt, shorten >=-1pt] (2) -- (3);
\draw [shorten <=-1pt, shorten >=-1pt] (3) -- (4);
\draw [shorten <=-1pt, shorten >=-1pt] (4) -- (5);
\draw [shorten <=-1pt, shorten >=-1pt] (5) -- (6);
\draw [shorten <=-1pt, shorten >=-1pt] (6) -- (7);

\draw [<->,dashed]  (1) .. controls +(-1,.6) and +(-1,-.6) .. (7);
\draw [<->,dashed]  (2) .. controls +(.8,1) and +(.8,-1) .. (6);
\draw [<-,dashed]  (2) .. controls +(-.6,1) and +(-.6,-1) .. (5);
\draw [<->,dashed]  (3) .. controls +(.4,.3) and +(.4,-.3) .. (4);
\end{tikzpicture}
\hspace*{1.5cm}
\begin{tikzpicture}
	%K3

		\node (1) at (5,0) {}; 
	  	\node (3) at (5,2.25) {};  
		\node (5) at (5,.75)  {};  
		\node (7) at (4,1.5) {};  
		\node (9) at (6,1.5) {};  
		\node (10) at (3,2.25) {}; 
     		\node (12) at (4,3) {}; 
		\node (13) at (4,3.75) {}; 
		\node (14) at (4,4.5) {}; 

		\draw (1) circle (2pt);
		\draw  (3) circle (2pt);
	\draw  (5) circle (2pt);
		\draw  (7) circle (2pt);
		\draw  (9) circle (2pt);
       	\draw  (10) circle (2pt);
	\draw  (12) circle (2pt);
	\draw  (13) circle (2pt);
		\draw  (14) circle (2pt);
%
%%order
   		\draw [shorten <=-1pt, shorten >=-1pt] (1) -- (5);
		\draw [shorten <=-2pt, shorten >=-2pt] (5) -- (7);
		\draw [shorten <=-2pt, shorten >=-2pt] (5) -- (9);
		\draw [shorten <=-2pt, shorten >=-2pt] (7) -- (3);
		\draw [shorten <=-2pt, shorten >=-2pt] (9) -- (3);
		\draw [shorten <=-2pt, shorten >=-2pt] (3) -- (12);
		\draw [shorten <=-2pt, shorten >=-2pt] (7) -- (10);
		\draw [shorten <=-2pt, shorten >=-2pt] (12) -- (10);
		\draw [shorten <=-1pt, shorten >=-1pt] (12) -- (13);
		\draw [shorten <=-1pt, shorten >=-1pt] (14) -- (13);

%%arrows

\draw [<->,dashed]  (1) .. controls +(3,1) and +(3,-1) .. (14);
\draw [<->,dashed]  (13) .. controls +(-2.5,.5) and +(-2.5,-.5) .. (5);
\draw [->,dashed]  (3) -- (10);
\draw [->,dashed]  (7) -- (9);
\draw [->,dashed]  (12) .. controls +(.4,.1) and +(.4,.1) .. (3);
\draw [->,dashed]  (10)--(9); 
\end{tikzpicture}
\end{center}
\caption{ $\E(\X)$ for $ \K_2$ (left) and $\K_3$ (right)} \label{fig:EXK2K3}
\end{figure}
\end{Kiex}

%%%%%%%%%%%%%%%%%%%%%%

\begin{DSex} \label{DSex} 
 
Background on this example and on the natural duality for double Stone algebras can be found in~\cite[Chapter 4, Theorem 3.14]{CD98} and~\cite{HaPr08}; the latter gives references to the original literature. 
 
Fig.~\ref{fig:dS} depicts the algebra 
$\alg{dS} =( \{0, a, b, 1\}; \wedge, \vee, {^*}, {^+}, 0, 1)$, 
generating the quasivariety of \defn{double Stone algebras} and its dualising alter ego
 $\twiddle{\alg{dS} }= (\{ 0,a,b,1\}; d,u,\preccurlyeq)$.
  Here $^*$ and $^+$  denote a pseudocomplement and a dual pseudocomplement, respectively.  
  On any  structure $\X \in \IScP(\twiddle{\alg{dS}})$, the maps $d$ and $u$ on~$X$ 
  are uniquely  determined by the partial order $\preccurlyeq$:  $d$ (respectively $u$) 
sends each element of $X$ to the unique  minimal point below it 
(respectively maximal point above it). 
 Accordingly we shall not show the action of these maps in our diagrams.

\begin{figure} [ht]
\begin{center}
	\begin{tikzpicture}
	
	  	\node [label=above:$1$] (1) at (-3,2) {};
		\node [label=right:$b$] (b) at (-3,1)  {};
  		\node [label=right:$a$] (a) at (-3,0) {};
		\node [label=below:$0$] (0) at (-3,-1)  {};

		\draw [shorten <=-1pt, shorten >=-1pt] (1) -- (b);
		\draw [shorten <=-1pt, shorten >=-1pt] (b) -- (a);
		\draw [shorten <=-1pt, shorten >=-1pt] (0) -- (a);

		\node at (-4,1.5) {$*$};
		\draw [<->]  (1) .. controls +(-1,0) and +(-1,0) .. (0);
		\draw [->]  (b) .. controls +(-.8,0) and +(-.8,0) .. (0);
		\draw [->]  (a) .. controls +(-.6,0) and +(-.6,0) .. (0);
		\node at (-2,1.5) {$+$};
		\draw [dashed,<->]  (1) .. controls +(1,0) and +(1,0) .. (0);
		\draw [dashed,->]  (a) .. controls +(.8,0) and +(.8,0) .. (1);
		\draw [dashed,->]  (b) .. controls +(.6,0) and +(.6,0) .. (1);

		\draw (1) circle [radius=2pt];
		\draw (b) circle [radius=2pt];
		\draw (a) circle [radius=2pt];
		\draw (0) circle [radius=2pt];

   		\node [label=below:$1$] (a1) at (2.8,-.2) {};
		\node [label=above:$b$] (ab) at (4,.8)  {};
  		\node [label=below:$a$] (aa) at (4,-.2) {};
		\node [label=below:$0$] (a0) at (1.6,-.2)  {};

		\draw [shorten <=-1pt, shorten >=-1pt] (aa) -- (ab);
		
		\node at (2.5,2) {$u$};
		\draw [->] (2.3,1.8) to (2.3,2.2);
		\node at (3.1,2) {$d$};
		\draw [dashed,->] (3.3,2.2) to (3.3,1.8);
		\draw [->]  (a0) .. controls +(-.5,-0.5) and +(-.5,0.5) .. (a0);
		\draw [->]  (a1) .. controls +(-.5,-0.5) and +(-.5,0.5) .. (a1);
		\draw [->]  (aa) .. controls +(-.5,0) and +(-.5,0) .. (ab);

		\draw [dashed, ->]  (a0) .. controls +(.5,0.5) and +(.5,-0.5) .. (a0);
		\draw [dashed,->]  (a1) .. controls +(.5,0.5) and +(.5,-0.5) .. (a1);
		\draw [dashed,->]  (ab) .. controls +(.5,0) and +(.5,0) .. (aa);
		\draw (a1) circle [radius=2pt];
		\draw (ab) circle [radius=2pt];
		\draw (aa) circle [radius=2pt];
		\draw (a0) circle [radius=2pt];
	\end{tikzpicture}
\end{center}
\caption{The generating algebra $\alg{dS} $ and its alter ego}\label{fig:dS}
\end{figure}
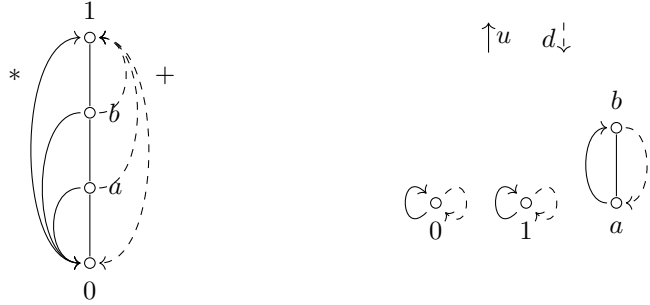
We now apply the Test Spaces Method, with $s=2$.

\begin{enumerate}

\item[1.]  \texttt{Compute $\D(\alg{dS})$.}

There are three endomorphisms of~$\alg{dS}$: the identity map, $\id$,  and  maps~$e_1$ sending  $a$ and $b$ to~$a$, and~$e_2$ sending  these elements to~$b$.  Equipping   $\{ \id, e_1,e_2\}$ with  the pointwise order from $(\twiddle{\alg{dS}};\preccurlyeq)$, we obtain a $3$-element chain with $e_1 \prec \id \prec e_2$.  The liftings of $d$ and~$u$ are the constant maps onto $e_1$ and $e_2$, respectively.

\begin{figure} [t]
\begin{center} 
\begin{tikzpicture}[scale=.85]
\node (lab) at  (7,-1)  {\small{$\twiddle{\alg{dS}}^2$ with $\X$ shaded}}; 
   		\node [label=below:{$00$}] (00) at (0,1) {}; 
		\node [label=below:{$0a$}] (0a) at (4,.5) {};  
  		\node [label=above:{$0b$}] (0b) at (4,1.5) {};  
		\node [label=below:{$01$}] (01) at (1,1) {};    
		\node [label=below:{$a0$}] (a0) at (5,.5) {};    
		\node [label=below:{$aa$}] (aa) at (10,0) {};   
  		\node [label=left:{$ab$}] (ab) at (9,1) {};    
		\node [label=below:{$a1$}] (a1) at (7,.5) {};  
		\node [label=above:{$b0$}] (b0) at (5,1.5) {};  
		\node [label=right:{$ba$}] (ba) at (11,1) {};   
  		\node [label=above:{$bb$}] (bb) at (10,2) {};  
		\node [label=above:{$b1$}] (b1) at (7,1.5) {};   
		\node [label=below:{$10$}] (10) at (2,1) {};  
		\node [label=below:{$1a$}] (1a) at (6,.5) {}; 
  		\node [label=above:{$1b$}] (1b) at (6,1.5) {};  
		\node [label=below:{$11$}] (11) at (3,1) {};  

\fill (00) circle [radius=2pt];
\draw (0a) circle [radius=2pt];
\draw (0b) circle [radius=2pt];
\draw (01) circle [radius=2pt];
\draw (a0) circle [radius=2pt];
\fill (aa) circle [radius=2pt];
\fill (ab) circle [radius=2pt];
\draw (a1) circle [radius=2pt];
\draw (b0) circle [radius=2pt];
\draw (ba) circle [radius=2pt];
\fill (bb) circle [radius=2pt];
\draw (b1) circle [radius=2pt];
\draw (10) circle [radius=2pt];
\draw (1a) circle [radius=2pt];
\draw (1b) circle [radius=2pt];
\draw (11) circle [radius=2pt];

	\draw [shorten <=-1pt, shorten >=-1pt]  (0a) -- (0b);
	\draw [shorten <=-1pt, shorten >=-1pt] (a0) -- (b0);
	\draw [shorten <=-1pt, shorten >=-1pt] (1a) -- (1b);
	\draw [shorten <=-1pt, shorten >=-1pt] (a1) -- (b1);
	\draw [shorten <=-2pt, shorten >=-2pt] (aa) -- (ab);
	\draw [shorten <=-2pt, shorten >=-2pt] (aa) -- (ba);
	\draw [shorten <=-2pt, shorten >=-2pt] (ab) -- (bb);
	\draw [shorten <=-2pt, shorten >=-2pt] (ba) -- (bb);

%boxes
\draw[gray] (-0.5,-0.3) --  (-0.5, 2.3)-- (7.5,2.3) -- (7.5,-0.3) --  (-0.5,-0.3); %class 00
\draw[gray] (8,0.7) --  (8, 1.4)-- (12,1.4) -- (12,0.7) -- (8,0.7); %class ab
\draw[gray] (9.5,-.7) --  (9.5, 0.4)-- (10.5,0.4) -- (10.5,-0.7) -- (9.5,-.7) ; % aa
\draw[gray] (9.5,1.7) --  (9.5, 2.9)-- (10.5,2.9) -- (10.5,1.7) -- (9.5,1.7) ; %bb

	\end{tikzpicture}
\end{center}
\caption{Step  2 for double Stone algebras} \label{fig:dS2}
\end{figure}
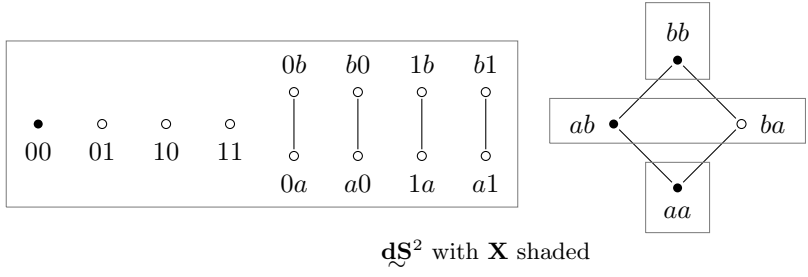
  
\item[2.]  \texttt{Determine a  TS-configuration $(\X,\gamma,\eta)$ with~$X$ of minimum size.}

\noindent 
The set 
$X=\{ 00, aa, ab,bb\} $
shown
in Fig.~\ref{fig:dS2} is 
the universe of a substructure of  $\twiddle{\alg{dS}}^2$. 
In fact
 it is a retract:
send each element of $X$ to itself, $ba$ to $ab$, and every other element to $00$.  Finally,
the $3$-element chain in $\X$ is isomorphic to $\D(\alg{dS})$.

\item[3.] \texttt{Determine the set $\mathcal{M}$ of maximal join-irreducible elements of $\mathcal{S}_{\X}$.}

The $\X$-substructure of $\X$ are determined by the sets $\{00\}$, $\{00, aa, bb\}$, and $X$. Hence $\X$ itself is $\X$-join-irreducible.

\item[5.] \texttt{Compute $\E(\X)$.}

We exploit the fact that  the  natural duality also operates as a restricted Priestley duality. 
The lattice reduct of $\E(\X)$ is isomorphic to the lattice of up-sets of $(X;\preccurlyeq)$, 
and hence to the product of a $2$-element chain and a $4$-element chain. 
The operations $^*$ and $^+$ of $\E(\X)$ are uniquely determined by its lattice order.  
Hence $\E(\X) \cong \alg{2} \times \alg{dS}$.

\end{enumerate}
\end{DSex}

%%%%%%%%%%%%%%%%%%%%%%%%%%%%%%%%%%%%%%%

\InvSex \label{InvSex}

The variety of \defn{involutive Stone algebras} is generated as a quasivariety by the following algebra~\cite{cg}:
\[
\alg{L_6} := (\{0, a, b, c, d, 1\}; \wedge, \vee, \sim, \nabla, 0, 1 ).
\]
The lattice order of $\alg{L_6} $ is shown in Fig.~\ref{fig:algL6}. 
The additional operations are defined as follows:  $\sim$ is a De Morgan negation  
which swaps $0$ and $1$ and swaps  $c$ and $d$  while fixing $a$ and $b$; 
$\nabla$ fixes $0$ and sends all other elements to $1$.

\begin{figure} [ht]
\begin{center}
	\begin{tikzpicture}
		\node [label=below:{$0$}] (0) at (.75,0) {};
		\node [label=left:{$a$}] (a) at (0,1.5) {};
		\node [label=right:{$b$}] (b) at (1.5,1.5) {};
		\node [label=left:{$c$}] (c) at (.75,2.25) {};
		\node [label=left:{$d$}] (d) at (.75,0.75) {};
		\node [label=above:{$1$}] (1) at (.75,3) {};

   		\draw [shorten <=-1pt, shorten >=-1pt] (0) -- (d);
		\draw [shorten <=-2pt, shorten >=-2pt] (d) -- (a);
   		\draw [shorten <=-2pt, shorten >=-2pt] (d) -- (b);
		\draw [shorten <=-2pt, shorten >=-2pt] (c) -- (a);
   		\draw [shorten <=-2pt, shorten >=-2pt] (c) -- (b);
		\draw [shorten <=-1pt, shorten >=-1pt] (c) -- (1);

		\draw (0) circle [radius=2pt];
		\draw (a) circle [radius=2pt];
		\draw (b) circle [radius=2pt];
		\draw (c) circle [radius=2pt];
		\draw (d) circle [radius=2pt];
		\draw (1) circle [radius=2pt];
		
		\draw [<->]  (0) .. controls +(-2,0.5) and +(-2,-0.5) .. (1);
		\draw [->]  (a) .. controls +(-.6,-0.55) and +(-.6,0.55) .. (a);
		\draw [->]  (b) .. controls +(.6,-0.55) and +(.6,0.55) .. (b);
		\draw [<->]  (c) -- (d);
		\draw [dashed,->]  (0) .. controls +(.6,-0.55) and +(.6,0.55) .. (0);
		\draw [dashed,->]  (d) .. controls +(2,0) and +(2,-0.5) .. (1);
		\draw [dashed,->]  (a) .. controls +(0,1) and +(-0.5,-0.5) .. (1);
		\draw [dashed,->]  (b) .. controls +(0,1) and +(0.5,-0.5) .. (1);
		\draw [dashed,->]  (c) .. controls +(0.3,0.2) and +(0.2,-0.2) .. (1);

	\end{tikzpicture}
\hspace*{1cm}
	\begin{tikzpicture}
		\node [label=above:{$0$}] (5) at (0,0.75) {};
		\node [label=below:{$a$}] (1) at (2,0) {};
		\node [label=above:{$b$}] (2) at (2,1.5) {};
		\node [label=left:{$c$}] (3) at (1.25,.75) {};
		\node [label=right:{$d$}] (4) at (2.75,.75) {};
		\node [label=above:{$1$}] (6) at (4,0.75) {};

   		\draw [shorten <=-2pt, shorten >=-2pt] (1) -- (3);
		\draw [shorten <=-2pt, shorten >=-2pt] (1) -- (4);
		\draw [shorten <=-2pt, shorten >=-2pt] (3) -- (2);
		\draw [shorten <=-2pt, shorten >=-2pt] (4) -- (2);

		\draw (1) circle [radius=2pt];
		\draw (2) circle [radius=2pt];
		\draw (3) circle [radius=2pt];
		\draw (4) circle [radius=2pt];
		\draw (5) circle [radius=2pt];
		\draw (6) circle [radius=2pt];
		\draw [->]  (5) .. controls +(-.5,-0.5) and +(-.5,0.5) .. (5);
		\draw [->]  (6) .. controls +(-.5,-0.5) and +(-.5,0.5) .. (6);
		\draw [->]  (2) .. controls +(-.5,0.7) and +(.5,0.7) .. (2);
		\draw [->]  (1) -- (2);
		\draw [->]  (3) .. controls +(0,.5) and +(-.5,0) ..  (2);
		\draw [->]  (4) .. controls +(0,.5) and +(.5,0) ..  (2);
		\draw [dashed,->]  (5) .. controls +(.5,-0.5) and +(.5,0.5) .. (5);
		\draw [dashed,->]  (6) .. controls +(.5,-0.5) and +(.5,0.5) .. (6);
		\draw [dashed,<->]  (1) .. controls +(-1.8,0) and +(-1.8,0) ..  (2);
		\draw [dashed,->]  (3) .. controls +(-.6,-0.55) and +(-.6,0.55) ..  (3);
		\draw [dashed,->]  (4) .. controls +(.6,-0.55) and +(.6,0.55) .. (4);
	\end{tikzpicture}

\caption{The algebra $\alg{L_6}$ and the partial order of its alter ego \label{fig:algL6}}
\end{center}
\end{figure}
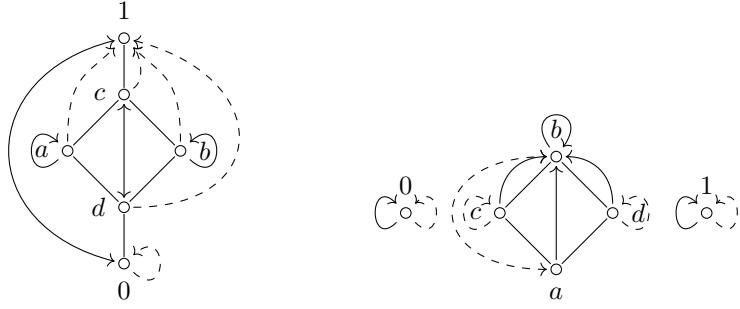

An alter ego of $\alg{L_6} $ can be found in~\cite[Example 4.15]{UnOpe}:
\[
\twiddle{\alg{L_6}} := (\{0, a, b, c, d, 1\}; g, h , \preccurlyeq),
\]
where
\[
g(x)= \begin{cases}
x & \mbox{if } x\in\{0,1\},\\
b & \mbox{otherwise} 
\end{cases}
\quad\mbox{and}\quad
h(x)= \begin{cases}
x & \mbox{if } x\in\{0,c,d,1\},\\
a & \mbox{if } x=b,\\
b & \mbox{if } x=a.
\end{cases}
\]
The partial order $\preccurlyeq$ is shown in Fig.~\ref{fig:algL6}. 

\begin{enumerate}

\item[1.]  \texttt{Compute $\D(\alg{L_6})$.} 
  
Writing $f \in \End (\alg{L_6})$ as the tuple $\big(f(0), f(a), f(b), f(c), f(d), f(1)\big)$, the four endomorphisms of $\alg{L_6}$ are $\id= (0, a, b, c, d, 1)$, $e_1 =(0, a, a, a, a, 1)$, $e_2 = (0, b, a, c, d, 1)$, and $e_3 =(0, b, b, b, b, 1)$.  The relation $\preccurlyeq$ lifts to the substructure $\D(\alg{L_6})$ to give  the partial order in which $\id$ and $e_2$ are incomparable,  $ e_1\prec \id \prec e_3$, and $e_1\prec e_2 \prec e_3$. 
 
\item[2.]  
 \texttt{Determine a  TS-configuration $(\X,\gamma,\eta)$ with~$X$ of minimum size.} 

Let $\X$ be the substructure of $\twiddle{\alg{L_6}}^2$ with universe $\{aa,cc,bb, ab,ba,00\}$, into which $\D(\alg{L_6}) $ embeds as $\X {\setminus} \{ cc,00\}$.

\item[3.] \texttt{Determine the set $\mathcal{M}$ of maximal join-irreducible elements of $\mathcal{S}_{\X}$.}

The $\X$-substructures of $\X$ form a chain and are determined by $\{00\}$,  $\{aa,cc,bb, 00\}$,  and~$X$. Hence ${\mathcal{M}= \{\X\}}$.

\item[5.] \texttt{Compute $\E(\X)$.}

A straightforward calculation shows that  $\E(\alg{X})$ is isomorphic to $\alg{2} \times \alg{10}$, where the $\class{DM}$-reduct of $\alg{10}$ is isomorphic to $\overline{\alg{D}_{42}}$ and $\nabla$ fixes~$0$ and sends all other elements to~$1$.

\end{enumerate}

Let us remark finally that the variety of \defn{Kleene--Stone algebras} can be identified with a subvariety of $\ISP(\alg{L_6})$: namely, $\ISP(\alg{L_5})$ where $\alg{L_5} = \alg{L_6} {\setminus} \{b\}$.  We shall omit the TSM analysis of this example, but have included data on cardinalities in Table~\ref{table:casestud} on page~\pageref{table:casestud}; the size of the free algebra on two generators  is taken from~\cite[Example 4.16]{UnOpe}.
  
%%%%%%%%%%%%%%%%%%%%%%%%%%%%%%%%%%%%%%%%%%%%%%%%%%%%%%%

\section{Concluding remarks}

In this paper we have addressed the computational problem of finding small algebras for checking admissibility in finitely generated quasivarieties. Using our Test Spaces Method we have been able to obtain smallest possible algebras for checking admissibility for several case studies that could not be handled by the algebraic method described in~\cite{MR12}. Even for some quasivarieties generated by very small algebras the algebraic approach  may fail, essentially because the required free algebras in these cases are too large. On the other hand, the Test Spaces Method capitalises on logarithmic features of the natural dualities for certain quasivarieties (see~\cite[Chapter~6]{CD98}), thereby obtaining significant computational benefits.   These benefits come  from two sources. Firstly, to implement the dual version of $\proc{SubPreHom}$ we do not need to work with the free algebras themselves, and it is therefore of no consequence if we are unable to compute them.  Secondly, the  $\proc{MinGenSet}$ procedure is more likely to be feasible to execute in its recast, dual, form. To emphasize the computational benefits of the approach, we have selected examples that are all 2-generated, have non-lattice operations with arity at most $1$, and differ more in terms of the size of their free algebras than in their algebraic structure.  Table~\ref{table:casestud} on page~\pageref{table:casestud}, summarising the data from our case studies, illustrates the benefits of the dual approach.  

A further remark deserves to be made concerning our examples in this paper. In no case was Step~4 in the Test Spaces Method required, since $\X$ always turned out to be join-irreducible in~$\mathcal{S}_\X$ for the TS-configuration $\X$ of minimum size obtained in Step~3. Examples where this is not the case can be easily constructed on the algebraic side, and we have therefore presented the theory to encompass the more general situation. Nevertheless, it would be of interest to find necessary and sufficient conditions on the generator $\M$ and/or its alter ego for the simpler situation to occur.

The applicability of the Test Spaces Method extends well beyond the examples considered in this paper.  In a further paper~\cite{CM18}  we employ the Test Spaces Method to study infinite chains of quasivarieties generated by finitely generated Sugihara chains: algebras for the logic $R$-mingle that have a binary non-lattice operation~(see~\cite{Dun70}). Quite sophisticated techniques from natural duality theory are used both to develop a general description of the natural dualities for these quasivarieties, and then to obtain a minimal algebra for checking admissibility.  In this setting we  deal with  quasivarieties whose generating algebras $\M$ are arbitrarily large, and where the minimal number of generators of~$\M$  also grows as $|M|$ increases.  In cases such as this, computational methods  are still helpful, but only in  the simplest cases.  Duality---in particular, pictorial representations of dual structures---has a role to play that extends beyond  computational considerations.

We have argued that the Test Spaces Method developed  is superior to the purely algebraic approach developed in~\cite{MR12}, both computationally and also, potentially, for more theoretical investigations.  But we should confront its limitations. It relies on the availability of a suitable strong  
duality for the considered finitely generated quasivariety. There do exist methods for automatically generating such a duality that are widely applicable (see, e.g.,~\cite[Sections 3.3 and 7.2]{CD98}); they encompass in particular all finitely generated lattice-based quasivarieties.  The resulting dualities may be complicated but, at least when the generating algebra is small, they may nonetheless allow the methods of this paper to be applied.

%------------------------------------------------------------------------------

\end{document}